\newcolumntype{C}[1]{>{\centering\arraybackslash}m{#1}}
\theoremstyle{definition}
\theoremstyle{definition}
\newtheorem{theorem}{Theorem}[section]
\newtheorem{lemma}[theorem]{Lemma}
\newtheorem{corollary}[theorem]{Corollary}
\newtheorem{proposition}[theorem]{Proposition}
\newtheorem{problem}[theorem]{Problem}
\newcommand{\Cay}{\mathrm{Cay}}
\def \CC {\mathbb {C}}
\def \ZZ {\mathbb {Z}}
\newcommand{\modulo}[3]{#1 \equiv #2 \: (\textrm{mod }#3)}
\newcommand{\notmodulo}[3]{#1 \not\equiv #2 \: (\textrm{mod }#3)}
\newcommand{\modul}[2]{#1 \: (\textrm{mod }#2)}
\newcommand\blfootnote[1]{%
  \begingroup

  \renewcommand\thefootnote{}\footnote{#1}%

  \addtocounter{footnote}{-1}%

  \endgroup

}
\begin{document}

\begin{center}
\Large{\textbf{Distance magic labelings of Cartesian products of cycles}} \\ [+4ex]
Ksenija Rozman{\small$^{a,*}$}, Primo\v z \v Sparl{\small$^{a, b, c}$} \\ [+2ex]
{\it \small
$^a$Institute of Mathematics, Physics and Mechanics, Ljubljana, Slovenia\\
$^b$University of Ljubljana, Faculty of Education, Ljubljana, Slovenia\\
$^c$University of Primorska, Institute Andrej Maru\v si\v c, Koper, Slovenia\\
}
\end{center}


\blfootnote{

Email addresses:
ksenija.rozman@pef.uni-lj.si (Ksenija Rozman),
primoz.sparl@pef.uni-lj.si (Primo\v z \v Sparl)

* - corresponding author at: Institute of Mathematics, Physics and Mechanics, Ljubljana, Slovenia
}

\hrule

\begin{abstract}

A graph of order $n$ is {\em distance magic} if it admits a bijective labeling of its vertices with integers from $1$ to $n$ such that each vertex has the same sum of the labels of its neighbors. In this paper we classify all distance magic Cartesian products of two cycles, thereby correcting an error in a widely cited paper from 2004. Additionally, we show that each distance magic labeling of a Cartesian product of cycles is determined by a pair or quadruple of suitable sequences, thus obtaining a complete characterization of all distance magic labelings of these graphs. We also determine a lower bound on the number of all distance magic labelings of $C_{m} \square C_{2m}$ with $m \ge 3$ odd.

\end{abstract}

\hrule

\begin{quotation}

\noindent {\em \small Keywords:} distance magic graph, distance magic labeling, Cartesian product of cycles

\end{quotation} 

\section{Introduction} %

A graph $\Gamma=(V,E)$ of order $n$ is said to be {\em distance magic} if it admits a bijection $\ell: V \to \{1,\ldots, n\}$ such that the sum of the labels of all neighbors of a vertex $v \in V$, called the {\em weight} of $v$, is the same for all $v \in V$. The concept of distance magic graphs has been motivated by the notion of magic rectangles at the beginning of this century and has since been studied by numerous researchers, as indicated by a survey of existing results on distance magic labelings gathered in~\cite{AFK11} (see also~\cite{Gallian22} for a dynamic survey of all graph labelings). 

One of the first families studied was the family of Cartesian products of cycles. The necessary and sufficient condition from~\cite[Theorem 2.1]{RSP04} for such graphs to be distance magic is one of the most cited results on distance magic graphs in the literature (with more than 30 citations to date). However, while the condition given is indeed sufficient, it is unfortunately not necessary. In this paper we thus revisit the problem of determining all distance magic Cartesian products of cycles and provide a complete classification. More precisely, we prove the following theorem.

\begin{theorem} \label{theorem1}
Let $m,n$ be integers with $n \ge m \ge 3$. Then the Cartesian product of cycles $C_m \square C_n$ is distance magic if and only if $n=2m$ with $m$ odd, or $n=m$ with $\modulo{m}{2}{4}$.
\end{theorem}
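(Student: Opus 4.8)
The plan is to study a distance magic labeling $\ell$ of $\Gamma = C_m \square C_n$ through the single scalar identity it must satisfy at every vertex. Writing vertices as pairs $(i,j)$ with $i \in \ZZ_m$, $j \in \ZZ_n$ and $f(i,j) = \ell(i,j)$, a double count of all vertex weights first pins the magic constant to $k = 2(mn+1)$, and the distance magic condition becomes
\[
f(i-1,j) + f(i+1,j) + f(i,j-1) + f(i,j+1) = k \qquad \text{for all } (i,j). \tag{$\star$}
\]
Summing $(\star)$ over a fixed column (resp.\ row) yields the linear recurrences $C_{j-1} + 2C_j + C_{j+1} = mk$ and $R_{i-1} + 2R_i + R_{i+1} = nk$ for the column sums $C_j$ and row sums $R_i$; solving these with the periodicity forced by the cycles shows that the column sums are constant when $n$ is odd and two-valued alternating when $n$ is even (symmetrically for rows and $m$). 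Summing $(\star)$ instead against the weights $(-1)^i$, available when $m$ is even, produces an alternating recurrence whose only periodic solutions are constant. These invariants are the first ingredients and already encode the parity obstructions.

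The structural heart is that $(\star)$ is a propagation rule: solving for the next column gives $\mathbf{c}_{j+1} = k\mathbf{1} - A_{C_m}\mathbf{c}_j - \mathbf{c}_{j-1}$, where $\mathbf{c}_j$ is the $j$-th column vector and $A_{C_m}$ is the adjacency matrix of $C_m$. Hence a labeling is rigid: it is completely determined by two consecutive columns, and the requirement that the column sequence close up after $n$ steps, together with the analogous row recurrence, is exactly what I would package as the ``pair (or, in the doubly even case, quadruple) of suitable sequences'' description. Diagonalizing $A_{C_m}$, the recurrence decouples into scalar linear recurrences indexed by the eigenvalues $2\cos(2\pi s/m)$, whose characteristic roots lie on the unit circle; the periodicity condition $\mathbf{c}_n = \mathbf{c}_0$, $\mathbf{c}_{n+1} = \mathbf{c}_1$ survives only for those Fourier modes $(s,t)$ with $\cos(2\pi s/m) + \cos(2\pi t/n) = 0$, i.e.\ for which $0$ is an eigenvalue of $A_\Gamma$.

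This observation drives necessity. When $m$ and $n$ are both odd, reducing $\tfrac{s}{m} \pm \tfrac{t}{n} \equiv \tfrac12 \pmod 1$ to $2(sn \pm tm) \equiv mn \pmod{2mn}$ gives an even-equals-odd contradiction, so no nonconstant magic function exists and $\Gamma$ is not distance magic. In the remaining cases at least one of $m,n$ is even, and I would combine the explicit description of all magic functions (the surviving modes) with the bijectivity of $\ell$ onto $\{1,\dots,mn\}$ to force the strong arithmetic relations $n=2m$ (when exactly one of $m,n$ is even, necessarily with $m$ odd) and $n=m$ (when both are even), together with the refined parity condition $\modulo{m}{2}{4}$. \emph{This bijectivity step is the main obstacle:} the algebra alone permits many real-valued magic functions for every even $n$, so the real work is to show that in each excluded case the rigid family of solutions cannot hit each label exactly once. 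I expect to argue this by confronting the two-valued column/row sums and the alternating-sum invariants above with the exact multiset $\{1,\dots,mn\}$ and deriving a numerical contradiction, for instance ruling out $\modulo{m}{0}{4}$ and the case $n \neq 2m$.

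Finally, for sufficiency I would exhibit explicit labelings for the two surviving families and verify $(\star)$ and bijectivity directly. For $n=2m$ with $m$ odd I would build the labeling around the checkerboard kernel vector $(i,j)\mapsto(-1)^j$, so that consecutive columns of opposite parity balance each other, interleaving two arithmetic ``staircases'' on the even and odd columns. For $n=m$ with $\modulo{m}{2}{4}$ I would use a diagonal construction on $C_m \square C_m$, defining $\ell$ through residues of a linear form in $i$ and $j$ modulo $m$ and $m^2$; the condition $\modulo{m}{2}{4}$ is precisely what makes the neighbor sums constant while keeping the map a bijection. In each construction, checking bijectivity reduces to a routine counting argument modulo $m$.
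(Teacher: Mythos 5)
Your framework is sound as far as it goes, and it parallels the paper's: your Fourier-mode analysis of the transfer recurrence $\mathbf{c}_{j+1}=k\mathbf{1}-A_{C_m}\mathbf{c}_j-\mathbf{c}_{j-1}$ is exactly the character-theoretic statement that any labeling, viewed as a vector, lies in the span of the character eigenvectors $\chi_{(s,t)}$ with $\cos(2\pi s/m)+\cos(2\pi t/n)=0$, and your even-equals-odd contradiction correctly disposes of the case where $m$ and $n$ are both odd. The genuine gap is the rest of necessity, which you yourself flag as ``the main obstacle'' and then only gesture at. The tools you propose for it --- two-valued alternating column sums, constant row sums, alternating-sum invariants confronted with the multiset $\{1,\dots,mn\}$ --- are aggregate invariants, and they are \emph{consistent} in the excluded cases, so no numerical contradiction can come out of them alone. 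For example, for $C_4\square C_4$ or $C_3\square C_{12}$ one checks that column sums may alternate, row sums may be constant, and the signed sums $\sum_i(-1)^i f(i,j)$ are constant in $j$; nothing clashes with the label multiset at this level of aggregation. The actual obstruction, which the paper extracts from a gcd-and-parity analysis of the admissible modes, is pointwise rather than aggregate: in every excluded case \emph{all} surviving modes $(s,t)$ take equal values at two specific vertices, hence so does every linear combination of them. Concretely, for $C_3\square C_{12}$ every admissible $(s,t)$ has $t$ even, so every zero-eigenvector satisfies $f(i,j)=f(i,j+6)$; for $C_m\square C_m$ with $4\mid m$ every admissible pair has $s,t$ of equal parity, forcing $f(i,j)=f(i+\frac m2,j+\frac m2)$. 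This repetition contradicts the injectivity of a labeling and kills these cases instantly. That per-vertex repetition argument is the missing idea; it is available inside your mode decomposition, but you would have to carry out the arithmetic classification of the admissible set (reducing to $n=m2^t$ via odd prime divisors of $m/\gcd(m,n)$ and $n/\gcd(m,n)$, then analyzing $t$ and parity), which your proposal does not attempt.

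Secondarily, your sufficiency constructions are only sketches. For $n=2m$ with $m$ odd your ``interleaved staircases on even and odd columns'' is plausibly the paper's construction (even columns carry an $m\times m$ table $T$, odd columns carry $-T$ suitably shifted, using $\ell_{i,j}=-\ell_{i,j+m}$), but for $n=m$ with $\modulo{m}{2}{4}$ the paper needs a genuinely richer object --- a \emph{pair} of interleaved $m/2$-indexed tables built from a quadruple of sequences, reflecting the relation $\ell_{i,j}=-\ell_{i+m/2\cdot 2,\,j+m/2\cdot 2}$ --- and it is not clear that a single diagonal ``linear form modulo $m$ and $m^2$'' can simultaneously satisfy the magic condition and bijectivity there. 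Both constructions would need to be written out and verified before the ``if'' direction can be considered proved.
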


In addition, we address the open problem from~\cite{RSP04} of finding all distance magic labelings of Cartesian products of cycles. In particular, for each of the two possibilities from Theorem~\ref{theorem1} we characterize all distance magic labelings of such graphs in terms of the existence of appropriate sequences of integers (see Theorem~\ref{lemma2} and Theorem~\ref{lemma22}). Moreover, we establish a lower bound for the number of all distance magic labelings of $C_m \square C_{2m}$ with $m \ge 3$ odd (see Proposition~\ref{lemma4}). 

The main tool in our arguments is a result from~\cite{MS21} linking the property of being distance magic (for regular graphs) to eigenvalues and eigenvectors, and the irreducible group characters of abelian groups, which turn out to be highly useful in the study of distance magic Cayley graphs (see for instance,~\cite{FMMR23,MS21,MS23, MSip}).

\section{Preliminaries} \label{sec:2}
In this section we state some basic definitions and notations that will be used throughout this paper. All graphs are assumed to be simple, finite, connected and undirected. 

For an integer $n$ we let $\ZZ_n$ denote the ring of residue classes modulo $n$ as well as the cyclic group of order $n$. It is well known that the Cartesian product $C_m \square C_n$ with $m,n \ge 3$ is in fact the tetravalent Cayley graph of the abelian group $\ZZ_m \times \ZZ_n$ with the connection set $S= \{\pm (1,0), \pm(0,1)\}$. Recall that the Cayley graph $\Cay(A;S)$ of an additive abelian group $A$ with respect to the connection set $S$, where $S \subset A$ with $S=-S$ and $0 \notin S$, is the graph with vertex-set $A$ in which two vertices $g,h \in A$ are adjacent if and only if $h-g \in S$. 

As in~\cite{RSup} we use a slightly nonstandard definition of a distance magic labeling which however leads to the same definition of a graph being distance magic (see~\cite{RSup}). For a positive integer $n$ we let $\mathcal{N}_n = \{1-n, 3-n, 5-n, \ldots, n-1\}$. Let $\Gamma =(V, E)$ be a tetravalent graph of order $n$. Then a {\em distance magic labeling} of~$\Gamma$ is a bijection $\ell:V \longrightarrow \mathcal{N}_n$ such that the weight of $v$ is equal to $0$ for all $v \in V$. Throughout this paper we will always be working with this alternative definition of a distance magic labeling.

\subsection{The characters}

As was first pointed out in~\cite{MS21} the property of being distance magic for regular graphs can be expressed in terms of eigenvalues and corresponding eigenvectors of the adjacency matrix. In particular, the following holds.

\begin{lemma}~\cite[Lemma 2.1]{MS21} \label{lemma:chi}
Let $\Gamma=(V,E)$ be a regular graph of order $n$ and even valency. Then $\Gamma$ is distance magic if and only if $0$ is an eigenvalue of the adjacency matrix of~$\Gamma$ and there exists a corresponding eigenvector with the property that a certain permutation of its entries results in the arithmetic sequence 
$1-n, 3-n, 5-n, \ldots, n-1.$
In particular, if $\Gamma$ is distance magic then $0$ is an eigenvalue for the adjacency matrix of~$\Gamma$ and there exists a corresponding eigenvector whose entries are pairwise distinct.
\end{lemma}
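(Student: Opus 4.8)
The plan is to recast the magic condition as a statement about the kernel of the adjacency matrix and then read off the two implications directly. Fix an ordering of $V$ and identify any labeling $\ell \colon V \to \mathcal{N}_n$ with the (column) vector $\mathbf{x} \in \RR^n$ whose $v$-th entry is $\ell(v)$. If $A$ denotes the adjacency matrix of $\Gamma$, then for every vertex $v$ the $v$-th entry of $A\mathbf{x}$ is precisely $\sum_{u \sim v} \ell(u)$, that is, the weight of $v$. Hence, under the alternative definition of distance magic adopted in this paper (where the labels range over $\mathcal{N}_n$ and the required common weight is $0$), the condition ``every vertex has weight $0$'' is literally the condition $A\mathbf{x} = \mathbf{0}$, i.e.\ $\mathbf{x} \in \ker A$.

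With this observation both directions become immediate. If $\Gamma$ is distance magic, pick a distance magic labeling $\ell$ and let $\mathbf{x}$ be the associated vector. Then $A\mathbf{x}=\mathbf{0}$, and since $\ell$ is a bijection onto $\mathcal{N}_n$ the vector $\mathbf{x}$ is nonzero; thus $0$ is an eigenvalue of $A$ and $\mathbf{x}$ is a corresponding eigenvector. Moreover the entries of $\mathbf{x}$ are exactly the elements of $\mathcal{N}_n = \{1-n, 3-n, \ldots, n-1\}$, each occurring once, so a suitable permutation of them is the arithmetic sequence $1-n, 3-n, \ldots, n-1$. Conversely, suppose $0$ is an eigenvalue of $A$ with a corresponding eigenvector $\mathbf{x}$ whose entries, after some permutation, equal $1-n, 3-n, \ldots, n-1$. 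As $\mathcal{N}_n$ consists of $n$ distinct integers and $|V| = n$, defining $\ell(v)$ to be the $v$-th entry of $\mathbf{x}$ yields a bijection $\ell \colon V \to \mathcal{N}_n$, and $A\mathbf{x} = \mathbf{0}$ says exactly that every weight is $0$; hence $\ell$ is a distance magic labeling and $\Gamma$ is distance magic. The concluding ``in particular'' assertion then needs nothing further, since the entries of such an eigenvector are the pairwise distinct elements of $\mathcal{N}_n$.

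I expect the only genuinely delicate point to be the normalization: an eigenvector for the eigenvalue $0$ is determined only up to a scalar, so the content of the lemma is not merely that $0$ is an eigenvalue but that the eigenvector may be chosen with entries equal to (a permutation of) the prescribed sequence, rather than merely proportional to it. This is exactly what passing through $\mathcal{N}_n$ secures, and it is also where the even valency enters upon comparison with the classical formulation using the labels $1, \ldots, n$: there the common weight is forced to be $\mu = k(n+1)/2$ for a $k$-regular graph (by double counting $\sum_v \mathrm{weight}(v) = k\sum_u \ell(u)$), so $k$ even guarantees $\mu \in \ZZ$, and the affine shift $\ell \mapsto 2\ell - (n+1)$ sending $\{1,\ldots,n\}$ onto $\mathcal{N}_n$ turns the constant weight $\mu$ into $2\mu - k(n+1) = 0$. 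Since the present paper works with the $\mathcal{N}_n$-definition from the outset, this reduction is already built in and the argument above suffices.
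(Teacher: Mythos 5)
Your proof is correct and is essentially the argument the paper itself relies on: the lemma is quoted from the literature, and in the remark immediately following it the paper points out exactly your identification, namely that under the $\mathcal{N}_n$-definition a distance magic labeling $\ell$ \emph{is} a nonzero kernel vector of the adjacency matrix (the $v$-entry being $\ell(v)$), and conversely. Your closing paragraph correctly supplies the one remaining point this identification alone does not cover, the translation between the classical $\{1,\ldots,n\}$-formulation (where even valency makes the forced magic constant $k(n+1)/2$ an integer) and the $\mathcal{N}_n$-formulation adopted in the paper.
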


We remark that in fact the proof of Lemma~2.1 reveals that each distance magic labeling of a regular graph corresponds to an eigenvector for the eigenvalue 0 of the adjacency matrix in the sense of Lemma~\ref{lemma:chi}. Indeed, if $\ell $ is a distance magic labeling of a regular graph~$\Gamma$, the $v$-entry of a corresponding eigenvector can simply be set as $\ell(v)$ and vice versa. 

In the case of Cayley graphs one can use the theory of irreducible group characters to determine when a suitable eigenvector for the eigenvalue $0$ exists. As was successfully demonstrated in~\cite{FMMR23, MS21, MS23, MSip} this approach is particularly fruitful in the case of Cayley graphs of abelian groups. Here we review some definitions and results important for our situation but refer the reader to~\cite{MS21} for details.

Consider the abelian group $A = \ZZ_{m} \times \ZZ_{n}$ for some integers $m, n \ge 3$. Following~\cite{Alperin95}, the group of irreducible characters of the group $A$ consists of the $mn$ homomorphisms $\chi_{(a,b)}: A \to \{z \in \CC \colon z^{mn}=1\}$, $(a,b) \in A$, defined by 
\begin{equation} \label{eq:chi}
\begin{split}
\chi_{(a,b)}\left((x,y)\right) & = \left( e^{\frac{2 \pi i}{m}} \right)^{ax} \cdot \left( e^{\frac{2 \pi i}{n}} \right)^{by} \\
\end{split}
\end{equation} 
for all $(x,y) \in A$. Moreover, by~\cite{Godsil93} (see also~\cite{FMMR23, MS21}), the spectrum of the Cayley graph $\Cay(A;S)$ is 
\begin{equation} \label{eq:zero}
\{ \chi_{(a,b)}(S) \colon (a,b) \in A \}, \quad \textrm{ where } \quad  \chi_{(a,b)}(S) = \sum_{s \in S} \chi_{(a,b)}(s).
\end{equation}
The following fact was also pointed out in~\cite{MS21} (see also~\cite[Lemma 9.2]{Godsil93}). For the irreducible character $\chi$, let $\bold{w}_\chi$ denote the column vector indexed by the elements of $A$, with the $(x,y)$-entry of $\bold{w}_\chi$ equal to $\chi((x,y))$. Then the $mn$ vectors $\bold{w}_{\chi_{(a,b)}}$ where $(a,b) \in A$, form a complete set of eigenvectors for the adjacency matrix of $\Cay(A;S)$ and $\bold{w}_{\chi_{(a,b)}}$ corresponds to the eigenvalue $\chi_{(a,b)}(S)$. Therefore Lemma~\ref{lemma:chi} implies that for $\Cay(A;S)$ to be distance magic, it must admit at least one pair $(a,b) \in A$ such that $\chi_{(a,b)}(S)=0$. This leads to the following definition which was first introduced in~\cite{MS23}. Let $m,n \ge 3$ be integers and let $S \subset A$ be a subset such that $S=-S$ and $0 \notin S$. The set 
$$\mathscr{A}_{m,n}(S) = \{(a,b) \in A \colon \chi_{(a,b)}(S)=0\}$$
is said to be the set of all {\em admissible} elements of $A$ for $S$. 

\subsection{A necessary condition}

As already mentioned we will be studying the distance magic property of Cartesian products of two cycles. Recall that for $n \ge m \ge 3$ the Cartesian product  $\Gamma= C_m \square C_n$ is isomorphic to the Cayley graph $\Cay(\ZZ_m \times \ZZ_n; S)$, where $S= \{\pm (1,0), \pm(0,1)\}$. 

In this section we provide a short proof that the condition on $m$ and $n$ from Theorem~\ref{theorem1} is indeed necessary for $\Gamma$ to be distance magic. We do so using Lemma~\ref{lemma:chi} and irreducible group characters of $\ZZ_m \times  \ZZ_n$. 

Let  $n \ge m \ge 3$ and let $\Gamma=\Cay(\ZZ_m \times \ZZ_n;S)$, where $S=\{\pm (1,0), \pm(0,1)\}$. Observe that by~(\ref{eq:chi}) we have that $\chi_{(a,b)}((0,0))=1$ for all $(a,b) \in \ZZ_m \times \ZZ_n$. Moreover, (\ref{eq:chi}) and (\ref{eq:zero}) imply that $\chi_{(a,b)}(S)=0$ if and only if 
\begin{equation*}
e^{\frac{2\pi i a}{m}} +e^{-\frac{2\pi i a}{m}}+e^{\frac{2\pi i b}{n}}+e^{-\frac{2\pi i b}{n}}= 2 \cos \left( \frac{2 \pi a}{m}\right) + 2 \cos \left( \frac{2 \pi b}{n}\right)=0,
\end{equation*}
 which is equivalent to 
\begin{equation} \label{eq:nec}
\frac{2b}{n} \pm \frac{2a}{m}= 1+2k
\end{equation}
for some integer $k$. Note that this implies that if $\Gamma$ is distance magic then at least one of $n,m$ must be even. Since the connection set $S$ will always be $\{\pm (1,0), \pm(0,1)\}$ we abbreviate $\mathscr{A}_{m,n}(S)$ by~$\mathscr{A}_{m,n}$. 

\begin{proposition} \label{lemma:nec}
Let $n \ge m \ge 3$ and let $\Gamma=\Cay(\ZZ_m \times \ZZ_n;\{\pm (1,0), \pm(0,1)\})$. If $\Gamma$ is distance magic then $n=2m$ with $m$ odd, or $n=m$ with $\modulo{m}{2}{4}$.
\end{proposition}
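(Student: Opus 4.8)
The plan is to derive the claim entirely from Lemma~\ref{lemma:chi} together with the character description of the eigenspace for the eigenvalue $0$. Writing $A = \ZZ_m \times \ZZ_n$, the vectors $\mathbf{w}_{\chi_{(a,b)}}$ with $(a,b) \in \mathscr{A}_{m,n}$ span this eigenspace $E_0$, so by the remark following Lemma~\ref{lemma:chi} a distance magic labeling corresponds to a real vector $\mathbf{v} \in E_0$ whose entries are pairwise distinct. The key observation is that the translation $T_{(p,q)}\colon (x,y) \mapsto (x+p,y+q)$ of $A$ acts on $\mathbf{w}_{\chi_{(a,b)}}$ by the scalar $\chi_{(a,b)}((p,q))$. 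Hence if there is a nonzero $(p,q)$ with $\chi_{(a,b)}((p,q)) = 1$ for every $(a,b) \in \mathscr{A}_{m,n}$, then $T_{(p,q)}$ fixes $E_0$ pointwise, forcing $\mathbf{v}_{(x,y)} = \mathbf{v}_{(x+p,y+q)}$ and hence repeated entries, a contradiction. First, then, I would record the resulting necessary condition: if $\Gamma$ is distance magic, then $\mathscr{A}_{m,n}$ must generate all of $A$ (equivalently, its annihilator is trivial). In particular $\mathscr{A}_{m,n} \ne \emptyset$, which by~(\ref{eq:nec}) already forces $mn$ to be even.

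Next I would analyze the subgroup generated by $\mathscr{A}_{m,n}$. Splitting according to the sign in~(\ref{eq:nec}), write $\mathscr{A}_{m,n} = \mathscr{A}^+ \cup \mathscr{A}^-$, where $\mathscr{A}^{\pm}$ collects the pairs with $\tfrac{a}{m} \pm \tfrac{b}{n} \in \tfrac12 + \ZZ$. Setting $d = \gcd(m,n)$, each $\mathscr{A}^{\pm}$ is a coset of the order-$d$ subgroup $\{(a,b) : \tfrac am \pm \tfrac bn \in \ZZ\}$, and its coset represents an element of order $2$ in the corresponding quotient; hence $\langle \mathscr{A}^{\pm}\rangle$ has order $2d$, and $\langle \mathscr{A}_{m,n}\rangle = \langle \mathscr{A}^+\rangle + \langle \mathscr{A}^-\rangle$ has order at most $4d^2$. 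Generation therefore forces $mn \le 4d^2$; writing $m = dm'$, $n = dn'$ with $\gcd(m',n') = 1$ and $n' \ge m'$, this gives $m'n' \le 4$, so $m' = 1$, i.e.\ $m \mid n$, and $n/m \in \{1,2,3,4\}$. A short divisibility check on the order of $\langle \mathscr{A}^+\rangle \cap \langle \mathscr{A}^-\rangle$ (which would have to equal $4/(m'n')$) then rules out the ratio $3$, leaving only $n \in \{m, 2m, 4m\}$.

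It then remains to decide, for each of these three ratios, exactly which parities of $m$ survive, and this parity bookkeeping is where I expect the real work to lie. In each case I would solve~(\ref{eq:nec}) explicitly to describe $\mathscr{A}_{m,n}$ and exhibit an explicit generator of the annihilator. For $n = 4m$ every admissible pair has $b$ even, so $(0, n/2)$ always lies in the annihilator and this ratio is excluded outright. For $n = 2m$ every admissible pair satisfies $b \equiv m \pmod 2$, so when $m$ is even the element $(0, n/2)$ again obstructs, while for $m$ odd no such element exists and the case $n = 2m$, $m$ odd survives. For $n = m$ (necessarily $m$ even) every admissible pair has $a + b \equiv \tfrac m2 \pmod 2$, so when $\modulo{m}{0}{4}$ the element $(\tfrac m2, \tfrac m2)$ obstructs, whereas for $\modulo{m}{2}{4}$ it does not. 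Collecting the surviving cases yields precisely $n = 2m$ with $m$ odd or $n = m$ with $\modulo{m}{2}{4}$, as claimed. The main obstacle, beyond setting up the translation-invariance reduction cleanly, is the order computation of the second step — verifying that $\langle\mathscr{A}^{\pm}\rangle$ has order exactly $2d$ and extracting the ratio constraint — after which the parity analysis is routine.
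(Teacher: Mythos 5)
Your proposal is correct, and it runs on the same engine as the paper's proof: Lemma~\ref{lemma:chi} together with the character description of the $0$-eigenspace, so that any nonzero $(p,q)$ with $\chi_{(a,b)}((p,q))=1$ for all admissible $(a,b)$ forces repeated entries in every $0$-eigenvector and hence a contradiction; moreover your endgame (the obstructions $(0,n/2)$ for the ratios $2$ and $4$, and $(\frac m2,\frac m2)$ for the ratio $1$) is exactly the paper's final case analysis. The genuine difference lies in the middle reduction. The paper never leaves the ``exhibit an explicit obstructing element'' mode: for any odd prime $p$ dividing $m_0=m/d$ (resp.\ $n_0=n/d$), every admissible pair has $p\mid a$ (resp.\ $p\mid b$), so $(\frac mp,0)$ (resp.\ $(0,\frac np)$) annihilates; this forces $m_0=1$ and $n_0=2^t$, after which one parity argument disposes of all $t\ge 2$ simultaneously. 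You instead dualize the condition into the structural criterion that $\mathscr{A}_{m,n}$ must generate $\ZZ_m\times\ZZ_n$ (using non-degeneracy of the pairing, i.e.\ that a trivial annihilator forces generation), and then bound $|\langle \mathscr{A}_{m,n}\rangle|\le 4d^2$ via the coset-and-order computation for $\mathscr{A}^{\pm}$; this excludes every ratio with $m'n'>4$ in one stroke and eliminates ratio $3$ by the divisibility of $|\langle\mathscr{A}^+\rangle\cap\langle\mathscr{A}^-\rangle|$. Both routes are sound: the paper's is more elementary (no order bookkeeping, no appeal to duality beyond exhibiting elements), while yours isolates a cleaner conceptual criterion and treats large ratios uniformly rather than prime by prime. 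Two small points you should make explicit in a final write-up: both $\mathscr{A}^+$ and $\mathscr{A}^-$ are nonempty (they are exchanged by $(a,b)\mapsto(-a,b)$ and their union is nonempty -- your order-$2d$ claim needs this), and the identity $\langle\mathscr{A}_{m,n}\rangle=\langle\mathscr{A}^+\rangle+\langle\mathscr{A}^-\rangle$, which is what the $4d^2$ count actually bounds.
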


\begin{proof}
Suppose that $\Gamma$ is distance magic. Letting $d=\gcd(m,n)$ we set $n_0$ and $m_0$ to be the coprime positive integers such that $n=d n_0$ and $m=d m_0$. Then (\ref{eq:nec}) is equivalent to $2bm_0 \pm 2an_0 = d m_0 n_0(1+2k)$ for some integer $k$.
Suppose that there exists an odd prime divisor $p$ of $m_0$. Then for each $(a,b)\in \mathscr{A}_{m,n}$ the prime $p$ divides $an_0$, and so $\gcd(m_0, n_0)=1$ implies that $p$ divides $a$. By~(\ref{eq:chi}) we thus have that $\chi_{(a,b)}((\frac{m}{p}, 0))=1=\chi_{(a,b)}((0,0))$ for all $(a,b) \in \mathscr{A}_{m,n}$. Since each eigenvector for the eigenvalue~$0$ thus has equal entries at $(\frac{m}{p}, 0)$ and $(0,0)$, this contradicts Lemma~\ref{lemma:chi}. In a similar way we show that there are no odd prime divisors of $n_0$. Therefore, since $n \ge m$, we have that $m_0=1$ and $n_0=2^t$ for some integer $t \ge 0$, which implies that $m=d$ and $n=m2^t$.

We first consider the possibility that $t \ge 1$ and prove that in this case $m$ is odd and $t=1$ (and thus $n=2m$). Since $t \ge1$, (\ref{eq:nec}) is equivalent to $b \pm 2^t a= 2^{t-1} m (1+2k)$, $k \in \ZZ$. Therefore, if $t \ge 2$ or $m$ is even, $b$ must be even, and so $\chi_{(a,b)}((0,\frac{n}{2}))=1=\chi_{(a,b)}((0,0))$ for all $(a,b) \in \mathscr{A}_{m,n}$, again contradicting Lemma~\ref{lemma:chi}. Therefore, $t=1$ and $m$ is odd, as claimed.

We now consider the possibility that $t=0$, that is, when $n=m$. In this case (\ref{eq:nec}) is equivalent to $b \pm a = \frac{m(1+2k)}{2}$ for some $k \in \ZZ$, which implies that $m$ is even. If $m$ is divisible by 4 then $a$ and $b$ are of the same parity and then $\chi_{(a,b)}((\frac{m}{2},\frac{m}{2}))=1$ for all $(a,b) \in \mathscr{A}_{m,m}$, contradicting Lemma~\ref{lemma:chi}. 
Therefore, if $\Gamma$ is distance magic then $n=2m$ with $m$ odd or $n=m$ with $\modulo{m}{2}{4}$.
\end{proof}

To complete the proof of Theorem~\ref{theorem1} we only need to show that for each odd integer $m \ge 3$ the graphs  $\Gamma_1 = C_m \square C_{2m}$ and $\Gamma_2 = C_{2m} \square C_{2m}$ are indeed distance magic. For~$\Gamma_1$ this is done in Section~\ref{sec:3} and for~$\Gamma_2$ in Section~\ref{sec:4}. In fact, in Section~\ref{sec:3} we describe all distance magic labelings of~$\Gamma_1$ and in Section~\ref{sec:4} all distance magic labelings of~$\Gamma_2$, thereby addressing the problem posed in~\cite{RSP04} of determining all such labelings. It turns out that the following characterization of all admissible elements of the corresponding groups and the ensuing Corollary~\ref{lemma:subm2m} and Corollary~\ref{lemma:submm} will be of use.

Consider the case of~$\Gamma_1$. In this case (\ref{eq:nec}) is equivalent to $b \pm 2a =m (1+2k)$, $k \in \ZZ$. Since $0 \le a < m$ and $0 \le b < 2m$, we have that $-2m < b \pm 2a < 4m$, which in turn implies that $k \in \{0, \pm 1\}$. Moreover, if $0 \le a \le \frac{m-1}{2}$ then $k=0$, while if $\frac{m+1}{2} \le a < m$, then $k = \pm 1$. It is now easy to see that 
\begin{equation}\label{eq:admissiblem2m}
\mathscr{A}_{m, 2m} = \{(a, b) \colon 0 \le a <m, \; 0 \le b < 2m, \; b \equiv \modul{(m \mp 2a)}{2m}\}.
\end{equation}
In a similar way, considering the case of $\Gamma_2$, one can show that
\begin{equation}\label{eq:admissiblemm}
\mathscr{A}_{2m, 2m} = \{(a, b) \colon 0 \le a,b < 2m,\; b \equiv \modul{(m \mp a)}{2m}\}.
\end{equation}

Before stating and proving Corollary~\ref{lemma:subm2m} and Corollary~\ref{lemma:submm} we introduce the following notation and terminology that will be used in the remainder of this paper. We will represent a labeling $\ell$ of the vertex-set of a graph $\Gamma = C_m \square C_n$ by the $m \times n$ table $L=[\ell_{i,j}]^{0\le i < m}_{0 \le j < n}$ with $m$ rows and $n$ columns, where the entry $\ell_{i,j}$ corresponds to $\ell((i,j))$. We make an agreement that for all $m \times n$ tables in this paper the first index is always computed modulo~$m$, while the second one is always computed modulo~$n$. 

\begin{corollary} \label{lemma:subm2m}
Let $m \ge 3$ be an odd integer and suppose that $\Gamma=C_m \square C_{2m} \cong \Cay(\ZZ_m \times \ZZ_{2m}; \{\pm (1,0), \pm(0,1)\})$ is distance magic with a distance magic labeling~$\ell$. Then $\ell_{i,j}=-\ell_{i,j+m}$ for all $i,j$ with $0 \le i < m$, $0 \le j < 2m$.
\end{corollary}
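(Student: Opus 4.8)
The plan is to exploit the fact that, by the remark following Lemma~\ref{lemma:chi}, the labeling $\ell$ --- viewed as the column vector $\mathbf{v}$ indexed by $\ZZ_m \times \ZZ_{2m}$ whose $(i,j)$-entry equals $\ell_{i,j}$ --- is an eigenvector of the adjacency matrix of $\Gamma$ for the eigenvalue $0$. Since the vectors $\mathbf{w}_{\chi_{(a,b)}}$, $(a,b) \in \ZZ_m \times \ZZ_{2m}$, form a complete set of eigenvectors and $\mathbf{w}_{\chi_{(a,b)}}$ corresponds to the eigenvalue $\chi_{(a,b)}(S)$, the $0$-eigenspace is precisely the span of $\{\mathbf{w}_{\chi_{(a,b)}} \colon (a,b) \in \mathscr{A}_{m,2m}\}$. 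Hence $\mathbf{v}$ is a (complex) linear combination of these admissible character vectors, and it therefore suffices to establish the claimed anti-symmetry for each of them separately.

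The key computation is the behaviour of an admissible character under the shift $j \mapsto j+m$ in the second coordinate. For any $(a,b)$ one has, by~(\ref{eq:chi}),
$$\chi_{(a,b)}((i,j+m)) = \chi_{(a,b)}((i,j)) \cdot \chi_{(a,b)}((0,m)) = \chi_{(a,b)}((i,j)) \cdot (-1)^b,$$
since $\chi_{(a,b)}((0,m)) = \left(e^{\frac{2\pi i}{2m}}\right)^{bm} = e^{\pi i b} = (-1)^b$. Now I would invoke the description~(\ref{eq:admissiblem2m}) of $\mathscr{A}_{m,2m}$: for an admissible element $(a,b)$ we have $b \equiv \modul{(m \mp 2a)}{2m}$, and as $m$ is odd while $2a$ is even, this forces $b$ to be odd. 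Consequently $(-1)^b = -1$ for every admissible $(a,b)$, so the $(i,j+m)$-entry of $\mathbf{w}_{\chi_{(a,b)}}$ is the negative of its $(i,j)$-entry.

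Finally, since this anti-symmetry is a linear condition on vectors and $\mathbf{v}$ is a linear combination of admissible character vectors each satisfying it, $\mathbf{v}$ satisfies it as well; that is, $\ell_{i,j+m} = -\ell_{i,j}$ for all $i,j$, which is exactly the assertion. The only genuinely delicate point is the passage from \emph{each distance magic labeling is an eigenvector for the eigenvalue $0$} to \emph{it lies in the span of the admissible character vectors}; this rests on the completeness of the character eigenbasis recorded before the definition of $\mathscr{A}_{m,n}$, together with the identification of the $0$-eigenspace via~(\ref{eq:zero}). The remainder is the short parity observation that every admissible $b$ is odd, which is immediate from the oddness of $m$, and the evaluation $\chi_{(a,b)}((0,m))=(-1)^b$.
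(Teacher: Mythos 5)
Your proof is correct and takes essentially the same approach as the paper: both expand the labeling, viewed as a $0$-eigenvector, in the character eigenbasis over $\mathscr{A}_{m,2m}$, observe via~(\ref{eq:admissiblem2m}) that every admissible $b$ is odd so that $\chi_{(a,b)}((0,m))=-1$, and conclude by linearity using the homomorphism property of the characters. The only cosmetic difference is that the paper phrases the key cancellation as $\chi_{(a,b)}((0,0))+\chi_{(a,b)}((0,m))=0$ inside the sum, whereas you phrase it as each admissible character vector being multiplied by $(-1)^b=-1$ under the shift $j \mapsto j+m$.
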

\begin{proof}
By Lemma~\ref{lemma:chi} and the remark following it, $\ell$ corresponds to an eigenvector $\bold{w}$ for the eigenvalue 0 satisfying Lemma~\ref{lemma:chi}. Denote the $(x,y)$-entry of $\bold{w}$ by $\bold{w}_{x,y}$ and recall that
$$\ell_{x,y} = \bold{w}_{x,y}= \mkern-20mu\sum_{(a,b)\in \mathscr{A}_{m,2m}}  \mkern-20mu \lambda_{(a,b)} \chi_{(a,b)}((x,y))$$ 
for some $\lambda_{(a,b)}  \in \CC$. 

Let $(a,b) \in \mathscr{A}_{m,2m}$. By~(\ref{eq:chi}) we have that $\chi_{(a,b)}((0,0))=1$ and since by~(\ref{eq:admissiblem2m}) each~$b$ is odd, we also have that $\chi_{(a,b)}((0,m))=-1$. Therefore, 
\begin{equation*}
\chi_{(a,b)}((0,0)) + \chi_{(a,b)}((0,m)) = 0 \text{ for all } (a,b) \in \mathscr{A}_{m,2m}.
\end{equation*}
Consequently, for any $i,j$ with $0 \le i <m$, $0 \le j < 2m$, the fact that $\chi_{a,b}$ is a homomorphism implies that
\begin{equation*} 
\begin{split}
\ell_{i,j} + \ell_{i,j+m}& =  \bold{w}_{i,j} + \bold{w}_{i,j+m} = \mkern-20mu \sum_{(a,b) \in \mathscr{A}_{m, 2m}} \mkern-20mu \lambda_{(a,b)} \left(\chi_{(a,b)}((i,j)) + \chi_{(a,b)}((i,j+m))\right) =\\
&= \mkern-15mu \sum_{(a,b) \in \mathscr{A}_{m, 2m}} \mkern-15mu \lambda_{(a,b)} \chi_{(a,b)}((i,j)) \left(\chi_{(a,b)}((0,0)) + \chi_{(a,b)}((0,m))\right) = 0.
\end{split}
\end{equation*}
\end{proof}

\begin{corollary} \label{lemma:submm}
Let $m \ge 3$ be an odd integer and suppose that $\Gamma=C_{2m} \square C_{2m}= \Cay(\ZZ_{2m} \times \ZZ_{2m}; \{\pm (1,0), \pm(0,1)\})$ is distance magic with a distance magic labeling~$\ell$. Then $\ell_{i,j}=-\ell_{i+m,j+m}$ for all $i,j$ with $0 \le i,j < 2m$.
\end{corollary}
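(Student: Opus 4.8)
The plan is to follow the same strategy as in the proof of Corollary~\ref{lemma:subm2m}, with the shift vector $(0,m)$ replaced by $(m,m)$. By Lemma~\ref{lemma:chi} and the remark following it, the labeling $\ell$ corresponds to an eigenvector $\bold{w}$ for the eigenvalue $0$, which I would expand in the character basis as $\ell_{x,y} = \bold{w}_{x,y} = \sum_{(a,b) \in \mathscr{A}_{2m,2m}} \lambda_{(a,b)} \chi_{(a,b)}((x,y))$ for suitable coefficients $\lambda_{(a,b)} \in \CC$. The desired identity $\ell_{i,j} = -\ell_{i+m,j+m}$ is equivalent to $\ell_{i,j} + \ell_{i+m,j+m} = 0$, so, using that each $\chi_{(a,b)}$ is a homomorphism and hence $\chi_{(a,b)}((i+m,j+m)) = \chi_{(a,b)}((i,j))\,\chi_{(a,b)}((m,m))$, it suffices to establish that $\chi_{(a,b)}((0,0)) + \chi_{(a,b)}((m,m)) = 0$ for every admissible $(a,b)$.

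The heart of the argument is therefore the evaluation of $\chi_{(a,b)}((m,m))$. Using~(\ref{eq:chi}) with both moduli equal to $2m$, one finds $\chi_{(a,b)}((m,m)) = e^{\pi i (a+b)} = (-1)^{a+b}$. Since $\chi_{(a,b)}((0,0)) = 1$, the required cancellation holds precisely when $a+b$ is odd. To see that this is indeed the case, I would invoke the description~(\ref{eq:admissiblemm}) of $\mathscr{A}_{2m,2m}$, which gives $b \equiv m \mp a \pmod{2m}$ for each admissible pair. In the case $b \equiv m - a \pmod{2m}$ this reads $a + b \equiv m \pmod{2m}$, and since $m$ is odd while $2m$ is even, $a+b$ is odd; in the case $b \equiv m + a \pmod{2m}$ one has $b - a \equiv m \pmod{2m}$, so $b-a$ is odd, whence $a+b = (b-a) + 2a$ is odd as well.

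With $\chi_{(a,b)}((0,0)) + \chi_{(a,b)}((m,m)) = 1 + (-1) = 0$ established for all $(a,b) \in \mathscr{A}_{2m,2m}$, the conclusion then follows by the same one-line character computation as before:
$$\ell_{i,j} + \ell_{i+m,j+m} = \mkern-15mu \sum_{(a,b) \in \mathscr{A}_{2m,2m}} \mkern-15mu \lambda_{(a,b)} \chi_{(a,b)}((i,j)) \left( \chi_{(a,b)}((0,0)) + \chi_{(a,b)}((m,m)) \right) = 0.$$
The only genuinely new ingredient compared with Corollary~\ref{lemma:subm2m} is the parity bookkeeping showing that $a+b$ is odd; everything else is a transcription of the earlier argument. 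I expect this parity step to be the main subtlety, precisely because one must pass carefully from a congruence modulo $2m$ to an honest statement about the parity of the integer $a+b$, and it is the hypothesis that $m$ is odd that makes this work.
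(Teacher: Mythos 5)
Your proof is correct and takes exactly the route the paper intends: it replicates the character-expansion argument of Corollary~\ref{lemma:subm2m} with the shift $(0,m)$ replaced by $(m,m)$, and your parity bookkeeping (that $a+b$ is odd for every $(a,b) \in \mathscr{A}_{2m,2m}$) is precisely the observation the paper highlights when it notes that (\ref{eq:admissiblemm}) forces $a$ and $b$ to have different parity. Since the paper leaves this proof to the reader with only that hint, your write-up simply supplies the full details of the same argument.
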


\begin{proof}
The proof is similar and is left to the reader (note that in this case (\ref{eq:admissiblemm}) implies that $a$ and $b$ are of different parity).
\end{proof}

\section{Distance magic labelings of $\bold{C_m \square C_{2m}}$ with $\bold{m \ge 3}$ odd} \label{sec:3}
Before we present a distance magic labeling for the graph $C_m \square C_{2m}$ with $m \ge 3$ odd, we introduce the terminology we will be using in this section. Let $m \ge 3$ be an odd integer and let $L = [\ell_{i,j}]^{0\le i < m}_{0 \le j < 2m}$ be an $m \times 2m$ table of integers. Then the {\em reduced table re(L)} of $L$ is the $m \times m$ table $[t_{i,j}]_{0 \le i,j < m}$ where
\begin{equation} \label{eq:re}
t_{i,j}=\ell_{i, 2j}.
\end{equation}

\noindent
Conversely, let $T=[t_{i,j}]_{0 \le i,j < m}$ be an $m \times m $ table of integers. Then the corresponding {\em extended table} $ex(T)$ of $T$ is the $m \times 2m$ table $[\ell_{i,j}]^{0\le i < m}_{0 \le j < 2m}$, where
\begin{equation} \label{eq:l}
\begin{split}
\ell_{i,j}=\left\{
\begin{array}{ll}
t_{i, \frac{j}{2}}, & j \textrm{ even} \\
-t_{i, \frac{j+m}{2}}, & j \textrm{ odd}.
\end{array}
\right.
\end{split}
\end{equation}
Note that since $m$ is odd, the above definition~(\ref{eq:l}) does indeed make sense and implies that $\ell_{i,j}=-\ell_{i, j+m}$ for all $i,j$ with $0 \le i<m$, $0 \le j < 2m$. Observe that for any $m \times m$ table~$T$ we have that $re(ex(T))=T$ and similarly, if $C_m \square C_{2m}$ is distance magic and $L$ is the $m \times 2m$ table corresponding to a distance magic labeling of this graph, then $ex(re(L))=L$ by Corollary~\ref{lemma:subm2m}. 

Let $m \ge 3$ be an odd integer. An $m \times m$ table $T$ is said to be {\em distance magic} if 
\begin{equation} \label{eq:nn}
\textrm{exactly one of } k, -k \textrm{ appears in } T \textrm{ for all } k \in \mathcal{N}_{2m^2}
\end{equation} and
\begin{equation} \label{eq:t}
 t_{i-1, j} + t_{i+1, j} = t_{i, j+\frac{m-1}{2}} + t_{i, j-\frac{m-1}{2}} \ \textrm{ for all } i,j \textrm{ with } 0 \le i,j <m.
\end{equation}
The following observation indicates why we call such tables distance magic.

\begin{lemma} \label{welldefined}
Let $m \ge 3$ be an odd integer and let $\Gamma = C_m \square C_{2m}$. If $\Gamma$ is distance magic and $L$ denotes the $m \times 2m$ table corresponding to a distance magic labeling of $\Gamma$ then $re(L)$ is distance magic. Conversely, if an $m \times m$ table $T$ is distance magic then $ex(T)$ corresponds to a distance magic labeling of $\Gamma$ and consequently $\Gamma$ is distance magic. 
\end{lemma}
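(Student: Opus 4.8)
The plan is to isolate a single equivalence and feed it from both sides. For the forward statement Corollary~\ref{lemma:subm2m} gives $\ell_{i,j}=-\ell_{i,j+m}$ and $ex(re(L))=L$, so $L=ex(T)$ with $T=re(L)$; for the backward statement, setting $L=ex(T)$ we have $\ell_{i,j}=-\ell_{i,j+m}$ by the remark following~(\ref{eq:l}) together with $re(L)=re(ex(T))=T$. Hence in both directions it suffices to prove, for a table of the form $L=ex(T)$ with $T=re(L)$, that \emph{$L$ is a distance magic labeling of $\Gamma$ if and only if $T$ is a distance magic table}. I would then match the two defining properties of $T$, namely~(\ref{eq:nn}) and~(\ref{eq:t}), to the bijectivity of $\ell$ and to the vanishing of all vertex weights, respectively.

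For the bijectivity half I would use that $m$ is odd, so shifting a column index by $m$ reverses its parity and the involution $(i,j)\mapsto(i,j+m)$ pairs each even column of $L$ with an odd one. Since $re(L)$ collects precisely the even columns ($t_{i,j}=\ell_{i,2j}$) while the odd columns carry the negated entries ($\ell_{i,2s\pm1}=-t_{i,s+(m\pm1)/2}$), every value of $L$ and its negative occupy opposite-parity columns. Consequently the $\pm$ pairs of $\mathcal{N}_{2m^2}$ are split between $T$ and $-T$, and $\ell$ is a bijection onto $\mathcal{N}_{2m^2}$ if and only if the $m^2$ entries of $T$ form a transversal of these pairs, which is exactly condition~(\ref{eq:nn}).

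For the balance half I would compute the weight $W_{i,j}=\ell_{i-1,j}+\ell_{i+1,j}+\ell_{i,j-1}+\ell_{i,j+1}$ along an even column $j=2s$: the vertical neighbors contribute $t_{i-1,s}+t_{i+1,s}$, while the horizontal neighbors sit in the odd columns $2s\pm1$ and hence equal $-t_{i,s+(m-1)/2}-t_{i,s+(m+1)/2}$ by~(\ref{eq:l}). The single arithmetic fact doing the work is $s+\frac{m+1}{2}\equiv s-\frac{m-1}{2}\pmod m$, which rewrites the horizontal term as $-t_{i,s+(m-1)/2}-t_{i,s-(m-1)/2}$; thus $W_{i,2s}=0$ is literally~(\ref{eq:t}) evaluated at $(i,s)$. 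Finally, the relation $\ell_{i,j}=-\ell_{i,j+m}$ forces $W_{i,j+m}=-W_{i,j}$ for every $j$, and since every odd column $j$ has $j+m$ even, all weights vanish if and only if they vanish on the even columns, i.e. if and only if $T$ satisfies~(\ref{eq:t}). Combining the two halves gives the equivalence, and with it both directions of the lemma.

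The only genuinely delicate point is the bookkeeping across the two moduli: indices of $T$ live modulo $m$ while those of $L$ live modulo $2m$, and the factor-$2$ reindexing of $ex$ sends the horizontal neighbors (odd columns $2s\pm1$) to the shifted entries $s+(m\mp1)/2$ of $-T$. I expect verifying that these shifts collapse onto the symmetric offsets $\pm\frac{m-1}{2}$ appearing in~(\ref{eq:t}), via the congruence above, to be the crux. Once this index arithmetic is pinned down, everything else is a direct unwinding of the definitions~(\ref{eq:re}) and~(\ref{eq:l}) together with Corollary~\ref{lemma:subm2m}.
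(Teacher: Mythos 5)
Your proposal is correct and takes essentially the same route as the paper: both directions come down to unwinding the definitions~(\ref{eq:re}) and~(\ref{eq:l}) together with Corollary~\ref{lemma:subm2m} and the congruence $\frac{m+1}{2}\equiv-\frac{m-1}{2}\ (\textrm{mod }m)$, split into the same two halves (the $\pm$-transversal condition~(\ref{eq:nn}) versus bijectivity, and condition~(\ref{eq:t}) versus vanishing weights). The only cosmetic difference is that the paper checks the weights on odd columns by a second direct computation, whereas you deduce them from the antisymmetry $W_{i,j+m}=-W_{i,j}$, a valid minor streamlining of the same argument.
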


\begin{proof}
Assume first that $\Gamma$ is distance magic, let $L$ denote the $m \times 2m$ table corresponding to some distance magic labeling $\ell$ of $\Gamma$ and let $T=re(L)$. Then $L$ consists of the $2m^2$ (odd) integers from $\mathcal{N}_{2m^2}$. By Corollary~\ref{lemma:subm2m} we have that $\ell_{i,j}=-\ell_{i, j+m}$ for all $i,j$ with $0 \le i <m$, $0 \le j <2m$, and so $T$ clearly satisfies~(\ref{eq:nn}). Moreover, using~(\ref{eq:re}), Corollary~\ref{lemma:subm2m} and the fact that $\ell$ is a distance magic labeling of~$\Gamma$ we find that for all $i,j$ with $0 \le i,j < m$ 
$$ t_{i-1, j} + t_{i+1, j} = \ell_{i-1, 2j} + \ell_{i+1, 2j} = -\ell_{i, 2j-1} - \ell_{i, 2j+1} = \ell_{i, 2j+m-1} + \ell_{i, 2j+m+1} = t_{i, j+\frac{m-1}{2}} + t_{i, j-\frac{m-1}{2}}, $$
(note that $\modulo{\frac{m+1}{2}}{-\frac{m-1}{2}}{m}$) proving that $T$ is distance magic.

Conversely, suppose that $T$ is distance magic $m \times m$ table and let $L=ex(T)$. Since by definition for all odd $k \in \mathcal{N}_{2m^2}$, exactly one of $k, -k$ appears in $T$, (\ref{eq:l}) implies that $L$ consists of all odd $k \in \mathcal{N}_{2m^2}$. 
Pick $i, j$ with $0 \le i < m$, $0 \le j < 2m$. Then (\ref{eq:l}) and (\ref{eq:t}) imply that if $j$ is even, 
$$\ell_{i-1, j} + \ell_{i+1, j} + \ell_{i, j-1} + \ell_{i, j+1} = t_{i-1, \frac{j}{2}} + t_{i+1, \frac{j}{2}} - t_{i, \frac{j}{2}+\frac{m-1}{2}} - t_{i, \frac{j}{2}+\frac{m+1}{2}}=0$$ while if $j$ is odd
$$\ell_{i-1, j} + \ell_{i+1, j} + \ell_{i, j-1} + \ell_{i, j+1} = - t_{i-1, \frac{j+m}{2}} - t_{i+1, \frac{j+m}{2}}  + t_{i, \frac{j-1}{2}} + t_{i, \frac{j+1}{2}}=0. $$
Therefore, $L$ corresponds to a distance magic labeling of $\Gamma$.
\end{proof}

For integers $m,n \ge 3$ and $m \times n$ tables $T$ and $T'$ of integers we let the sum $T+T'$ of these two tables be the $m \times n$ table obtained by component-wise addition. Moreover, let $R=[r_0, r_1, \ldots, r_{m-1}]$ be a sequence of $m$ integers and let $s \in \{0,1, \ldots, m-1\}$. Then $\mathcal{T}(R,s)$ is the $m \times m$ table $[t_{i,j}]_{0 \le i,j <m}$, where $t_{i,j}=r_{j-si}$ (the index computed modulo $m$). Note that $\mathcal{T}(R,s)$ is the $m \times m$ table obtained by setting the 0-th row (the row consisting of all $t_{0,j}$ with $0 \le j < m$) to be $R$ and then obtaining each subsequent row by making a right cyclic $s$-shift of the previous row. If we set $X=\mathcal{T}(R,s)$ we denote the entries of such a table by $x_{i,j}$ with $0 \le i,j <m$.

\begin{proposition} \label{lemma1}
Let $m \ge 3$ be an odd integer and let $\Gamma=C_m \square C_{2m}$. Let $a_0, a_1, \ldots, a_{m-1} \in \{1-2m^2, \ldots, 2m^2-1\}$ be odd integers and $b_0, b_1, \ldots, b_{m-1} \in \{-2m^2, \ldots, 2m^2-2\}$ be even integers such that $a_0=1$, $b_0=0$ and that $\{x(a_i+b_j) \colon x \in \{-1, 1\}, 0 \le i,j <m  \} = \mathcal{N}_{2m^2}$. Then the table $\mathcal{T}([a_0, a_1, \ldots, a_{m-1}], \frac{m-1}{2})+ \mathcal{T}([b_0, b_1, \ldots, b_{m-1}], \frac{m+1}{2})$ is distance magic. Consequently, $\Gamma$ is distance magic. 
\end{proposition}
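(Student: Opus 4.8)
The plan is to show that the $m \times m$ table $T := X + Y$, where $X = \mathcal{T}([a_0, \ldots, a_{m-1}], \frac{m-1}{2})$ and $Y = \mathcal{T}([b_0, \ldots, b_{m-1}], \frac{m+1}{2})$, satisfies both defining properties~(\ref{eq:nn}) and~(\ref{eq:t}) of a distance magic table; the ``consequently'' part then follows immediately from Lemma~\ref{welldefined}. Writing $x_{i,j} = a_{j - \frac{m-1}{2}i}$ and $y_{i,j} = b_{j - \frac{m+1}{2}i}$ (indices computed modulo $m$), we have $t_{i,j} = x_{i,j} + y_{i,j}$.

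I would verify~(\ref{eq:t}) first, since it is the quicker of the two. As~(\ref{eq:t}) is additive in $T$, it suffices to check it separately for $X$ and for $Y$. For any singly shifted table $\mathcal{T}(R,s)$ with entries $c_{i,j} = r_{j - si}$, a direct substitution turns~(\ref{eq:t}) into the requirement $r_{j - si + s} + r_{j - si - s} = r_{j - si + \frac{m-1}{2}} + r_{j - si - \frac{m-1}{2}}$ for all $i,j$, which holds as soon as $s \equiv \pm \frac{m-1}{2} \pmod m$. For $X$ we have $s = \frac{m-1}{2}$, so this is immediate; for $Y$ we have $s = \frac{m+1}{2}$, and since $\frac{m-1}{2} + \frac{m+1}{2} = m$ we get $\frac{m+1}{2} \equiv -\frac{m-1}{2} \pmod m$, so the requirement again holds. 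Hence $T$ satisfies~(\ref{eq:t}).

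The more delicate condition is~(\ref{eq:nn}). Here the key observation is that the linear map $\phi \colon \ZZ_m \times \ZZ_m \to \ZZ_m \times \ZZ_m$ given by $\phi(i,j) = (j - \frac{m-1}{2}i,\, j - \frac{m+1}{2}i)$ is a bijection: its matrix has determinant $\frac{m+1}{2} - \frac{m-1}{2} = 1$, which is invertible modulo $m$. Consequently, as $(i,j)$ ranges over all of $\{0, \ldots, m-1\}^2$, the index pair $(j - \frac{m-1}{2}i,\, j - \frac{m+1}{2}i)$ runs exactly once over every $(p,q) \in \ZZ_m \times \ZZ_m$, so the multiset of entries of $T$ is precisely $\{a_p + b_q \colon 0 \le p,q < m\}$, each pair contributing one entry. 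It then remains to translate the hypothesis $\{x(a_i + b_j) \colon x \in \{-1,1\},\, 0 \le i,j < m\} = \mathcal{N}_{2m^2}$ into~(\ref{eq:nn}). Since this set has exactly $2m^2$ elements and is presented as the image of the $2m^2$ expressions $x(a_i+b_j)$, these expressions must be pairwise distinct, i.e. $(x,(i,j)) \mapsto x(a_i+b_j)$ is a bijection onto $\mathcal{N}_{2m^2}$. For a fixed $k \in \mathcal{N}_{2m^2}$ there is then a unique triple with $x(a_i+b_j) = k$, and its sign $x$ determines precisely which of $k, -k$ is realised as some $a_i + b_j$, that is, which of $k, -k$ appears in $T$; the other cannot appear, for that would produce a second such triple. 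This is exactly~(\ref{eq:nn}).

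The main obstacle is the bookkeeping in~(\ref{eq:nn}): one must correctly identify the \emph{multiset} of entries of $T$ via the bijection $\phi$ (rather than merely the set of values, since repeated values are a priori conceivable) and then carefully convert the set equality in the hypothesis into the ``exactly one of $k, -k$'' statement by a counting argument. By contrast, the verification of~(\ref{eq:t}) is a short computation once one reduces to a single shifted table and exploits $\frac{m+1}{2} \equiv -\frac{m-1}{2} \pmod m$.
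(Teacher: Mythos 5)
Your proposal is correct and takes essentially the same approach as the paper: verify~(\ref{eq:t}) via the shift identity $\frac{m+1}{2} \equiv -\frac{m-1}{2} \pmod{m}$, verify~(\ref{eq:nn}) by identifying the entries of the sum table with the sums $a_p+b_q$, and conclude by Lemma~\ref{welldefined}. The only cosmetic difference is that where you prove bijectivity of the index map $\phi$ by its determinant, the paper instead writes down its inverse explicitly (showing $c_{i-j,\,i+\frac{m-1}{2}(i-j)} = a_i + b_j$); your added care with multiplicities is a slight sharpening of the same argument rather than a different route.
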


\begin{proof}
Let $ A=\mathcal{T}([a_0, a_1, \ldots, a_{m-1}], \frac{m-1}{2})$ and $B=\mathcal{T}([b_0, b_1, \ldots, b_{m-1}], \frac{m+1}{2})$. Let $i,j$ be integers with $0 \le i,j <m$. Then 
\begin{equation} \label{eq:sum_a}
a_{i-1,j}=a_{j-\frac{m-1}{2}(i-1)}=a_{i,j+\frac{m-1}{2}} \quad \textrm{and} \quad a_{i+1,j} = a_{j-\frac{m-1}{2}(i+1)} = a_{i,j-\frac{m-1}{2}},
\end{equation}
\begin{equation} \label{eq:sum_b}
b_{i-1,j}=b_{j-\frac{m+1}{2}(i-1)}=b_{i,j-\frac{m-1}{2}} \quad \textrm{and} \quad b_{i+1,j} = b_{j-\frac{m+1}{2}(i+1)} = b_{i,j+\frac{m-1}{2}}
\end{equation}
(recall that $\modulo{\frac{m-1}{2}}{-\frac{m+1}{2}}{m}$).

\noindent
Let $C=A+B=[c_{i,j}]_{0 \le i,j <m}$. From (\ref{eq:sum_a}) and (\ref{eq:sum_b}) we have that 
\begin{equation*}
\begin{split}
c_{i-1, j} + c_{i+1, j} = a_{i, j+\frac{m-1}{2}} + b_{i, j-\frac{m-1}{2}} + a_{i, j-\frac{m-1}{2}}+ b_{i, j+\frac{m-1}{2}} = \\
 a_{i, j+\frac{m-1}{2}} + b_{i, j+\frac{m-1}{2}} + a_{i, j-\frac{m-1}{2}}+ b_{i, j-\frac{m-1}{2}} = c_{i, j+\frac{m-1}{2}} + c_{i, j-\frac{m-1}{2}}
\end{split}
\end{equation*}
for any $0 \le i,j <m$, proving that $C$ satisfies~(\ref{eq:t}). We now prove that $C$ also satisfies~(\ref{eq:nn}). Since $\{x(a_i+b_j) \colon x \in \{-1, 1\}, 0 \le i,j <m  \} = \mathcal{N}_{2m^2}$, it clearly suffices to prove that $\{a_i+b_j \colon 0 \le i,j <m  \}= \{c_{i,j} \colon 0 \le i,j <m \}$.  Since one inclusion follows from the definition of~$C$ we only verify that for each $i,j$ with $0 \le i, j < m$ the sum $a_i+b_j$ belongs to $\{c_{i,j} \colon 0 \le i,j <m \}$. Indeed,
\begin{equation*}
 c_{i-j, i+\frac{m-1}{2}(i-j)}= a_{i+\frac{m-1}{2}(i-j)-\frac{m-1}{2}(i-j)} + b_{i+\frac{m-1}{2}(i-j)-\frac{m+1}{2}(i-j)}= a_i+b_j. 
\end{equation*}
Therefore, $C$ is distance magic, and so Lemma~\ref{welldefined} implies that $\Gamma$ is distance magic. 
\end{proof}

In order to show that $C_m \square C_{2m}$ is distance magic for all odd integers $m \ge 3$ it thus suffices to find some specific pairs of sets $\{a_0, a_1, \ldots, a_{m-1}\}$ and $\{b_0, b_1, \ldots, b_{m-1}\}$, satisfying the conditions stated in Proposition~\ref{lemma1}. In the remainder of this section we call such pairs of sets, as well as the pair $\langle [a_0, a_1, \ldots, a_{m-1}], [b_0, b_1, \ldots, b_{m-1}]\rangle$ of corresponding sequences, {\em admissible}.\\

\begin{proposition}\label{example}
Let $m\ge 3$ be an odd integer. The pair $\left< \bold{a}, \bold{b} \right>$, where 
\setlength{\belowdisplayskip}{7pt}  \setlength{\belowdisplayshortskip}{7pt} 
\setlength{\abovedisplayskip}{7pt}  \setlength{\abovedisplayshortskip}{-3pt}
$$\bold{a}=[1,3,5, \ldots, 2m-3, 2m-1] \quad \textrm{and} \quad \bold{b}=[0, 2m, 4m, \ldots, 2(m-2)m, 2(m-1)m],$$
is admissible. Consequently, the graph $C_m \square C_{2m}$ is distance magic.
\end{proposition}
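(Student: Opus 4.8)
The plan is to reduce everything to Proposition~\ref{lemma1}: it suffices to verify that the two sequences $\mathbf{a}$ and $\mathbf{b}$ satisfy its hypotheses, for then the associated table is distance magic and the final assertion about $C_m \square C_{2m}$ follows from Lemma~\ref{welldefined}. Writing $a_i = 2i+1$ and $b_j = 2mj$ for $0 \le i,j < m$, the first and routine step is to check the easy conditions. Each $a_i$ is odd and satisfies $1 \le a_i \le 2m-1$, hence lies in $\{1-2m^2, \ldots, 2m^2-1\}$; each $b_j$ is even and satisfies $0 \le b_j \le 2m(m-1) = 2m^2 - 2m$, hence lies in $\{-2m^2, \ldots, 2m^2-2\}$; and clearly $a_0 = 1$, $b_0 = 0$.

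The crux is the key identity $\{x(a_i+b_j) \colon x \in \{-1,1\},\ 0 \le i,j < m\} = \mathcal{N}_{2m^2}$, and this is where the only real (albeit mild) work lies. First I would compute $a_i + b_j = (2i+1) + 2mj = 2(i+mj)+1$. The central observation is then purely arithmetic: the map $(i,j) \mapsto i + mj$ is a bijection from $\{0,\ldots,m-1\}^2$ onto $\{0, 1, \ldots, m^2-1\}$. This is exactly uniqueness of the base-$m$ representation, equivalently the division algorithm, since every $r \in \{0,\ldots,m^2-1\}$ is written uniquely as $r = i + mj$ with $0 \le i < m$ and $j = \lfloor r/m \rfloor < m$. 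Consequently $\{a_i + b_j \colon 0 \le i,j < m\} = \{2r+1 \colon 0 \le r \le m^2-1\} = \{1, 3, 5, \ldots, 2m^2-1\}$, the set of all positive odd integers not exceeding $2m^2-1$.

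To conclude, I would observe that adjoining the sign $x \in \{-1,1\}$ appends the negatives of these $m^2$ positive odd values. Since the numbers $1, 3, \ldots, 2m^2-1$ are pairwise distinct and none is zero, the $2m^2$ resulting values $\pm 1, \pm 3, \ldots, \pm(2m^2-1)$ are pairwise distinct and comprise precisely all odd integers in $\{1-2m^2, \ldots, 2m^2-1\}$, which is $\mathcal{N}_{2m^2}$; comparing cardinalities (both sides have $2m^2$ elements) upgrades the inclusion to equality. This shows $\langle \mathbf{a}, \mathbf{b}\rangle$ is admissible, and Proposition~\ref{lemma1} then gives that $C_m \square C_{2m}$ is distance magic. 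I do not expect a genuine obstacle here: the entire argument is elementary, and the sole point requiring care is the bijectivity of $(i,j)\mapsto i+mj$, which pins down that the sums $a_i + b_j$ sweep out every positive odd integer up to $2m^2-1$ exactly once.
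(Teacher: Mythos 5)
Your proof is correct and follows essentially the same route as the paper: verify the hypotheses of Proposition~\ref{lemma1} for $\mathbf{a}$ and $\mathbf{b}$ and then invoke it. The paper simply declares the key set identity $\{x(a_i+b_j)\} = \mathcal{N}_{2m^2}$ to be clear, whereas you spell it out via the bijectivity of $(i,j)\mapsto i+mj$ (uniqueness of base-$m$ representation), which is exactly the implicit reason.
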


\begin{proof}
For each $i$ with $0\le i < m$ we have that $a_i=1+2i$ and $b_i= 2im$. It is thus clear that $\{x(a_i+b_j) \colon x \in \{-1, 1\}, 0 \le i, j <m\}= \mathcal{N}_{2m^2}$ (see Figure~\ref{fig:m7} for the case $m=7$). Therefore, the pair $\left< \bold{a}, \bold{b} \right>$ is admissible, and so the graph $C_m \square C_{2m}$ is distance magic by Proposition~\ref{lemma1}.
\end{proof}
\begin{figure}[h] 
\centering
\includegraphics[scale=0.65, trim={1.5cm 12cm 0 1.8cm}, clip]{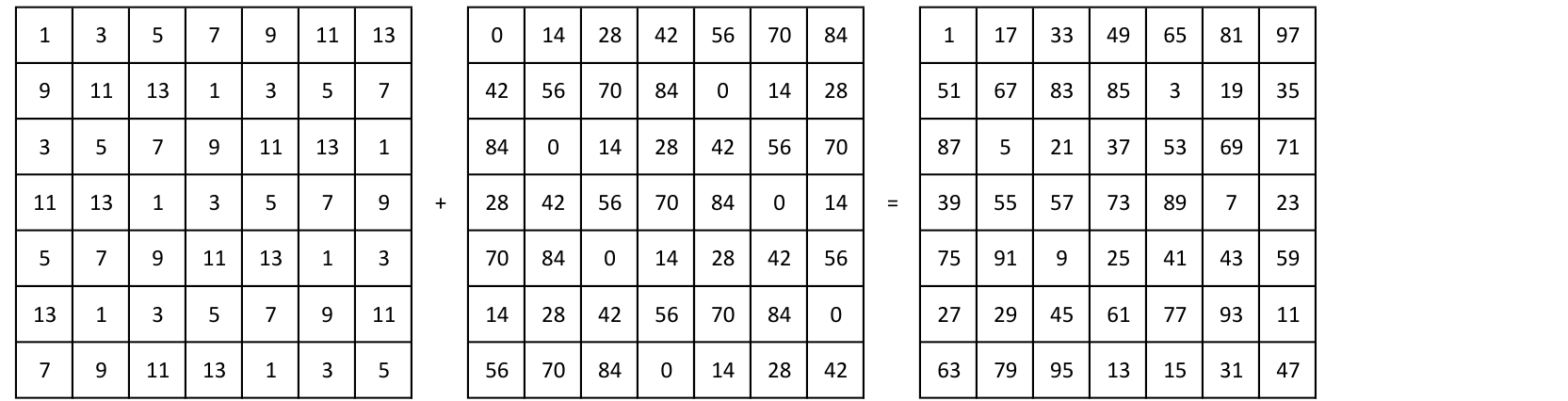}
\caption{Construction of a distance magic table for $m=7$.}
\label{fig:m7}
\end{figure}

As already mentioned, the authors of~\cite{RSP04} posed an open problem of determining all distance magic labelings of Cartesian products of cycles. In what follows we make a considerable step towards solving this problem for the case of $C_m \square C_{2m}$ where $m \ge 3$ is odd. By Lemma~\ref{welldefined} this is equivalent to finding all distance magic $m \times m$ tables. It turns out (see Theorem~\ref{lemma2}) that in fact each distance magic labeling of $C_m \square C_{2m}$ with $m \ge 3$ odd, arises from a suitable admissible pair. It thus suffices to determine all admissible pairs. This seems to be rather difficult and at the moment we are not able to determine the exact number of them. Nevertheless, we obtain a lower bound for their number (see Proposition~\ref{lemma4}) which seems to get closer and closer to the exact number as $m$ grows (see the remark at the end of this section).

\begin{lemma} \label{lemma:t2}
Let $m \ge 3$ be an odd integer and let $L$ be the $m \times 2m$ table corresponding to a distance magic labeling $\ell$ of $C_m \square C_{2m}$. Let $T=re(L)=[t_{i,j}]_{0 \le i,j < m}$. Then, for all $i,j$ with $0 \le i,j < m$, and all integers $s$ we have that
\begin{equation} \label{eq:t2}
 t_{i+1, j} - t_{i, j+\frac{m-1}{2}} = t_{i+1-s, j+\frac{m+1}{2}s} - t_{i-s, j+\frac{m-1}{2}+\frac{m+1}{2}s}.
\end{equation}
\end{lemma}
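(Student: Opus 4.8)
The plan is to exploit the fact that the right-hand side of~(\ref{eq:t2}) has exactly the same shape as the left-hand side, with the index pair $(i,j)$ replaced by $(i-s,\,j+\tfrac{m+1}{2}s)$. Accordingly I would introduce the shorthand $D_{i,j}=t_{i+1,j}-t_{i,j+\frac{m-1}{2}}$ for the left-hand side of~(\ref{eq:t2}); unwinding the definition shows that the right-hand side is precisely $D_{i-s,\,j+\frac{m+1}{2}s}$. Thus the assertion of the lemma is that $D$ is invariant under the shift $(i,j)\mapsto (i-1,\,j+\tfrac{m+1}{2})$ of its index pair, and it suffices to establish the single-step identity
\[
D_{i,j}=D_{i-1,\,j+\frac{m+1}{2}}\qquad\text{for all }i,j,
\]
and then to propagate it to an arbitrary integer $s$ by induction.

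Next I would verify the single step directly from the distance-magic relation~(\ref{eq:t}). Recall that $T=re(L)$ is a distance magic table by Lemma~\ref{welldefined}, so~(\ref{eq:t}) holds for all $i,j$. Reading all column indices modulo~$m$ and using $\frac{m-1}{2}+\frac{m+1}{2}=m\equiv 0$, the second entry of $D_{i-1,\,j+\frac{m+1}{2}}$ has column index $\bigl(j+\tfrac{m+1}{2}\bigr)+\tfrac{m-1}{2}\equiv j$, whence $D_{i-1,\,j+\frac{m+1}{2}}=t_{i,\,j+\frac{m+1}{2}}-t_{i-1,j}$. The single-step identity is therefore equivalent to
\[
t_{i+1,j}+t_{i-1,j}=t_{i,\,j+\frac{m-1}{2}}+t_{i,\,j+\frac{m+1}{2}} .
\]
Since $-\tfrac{m-1}{2}\equiv\tfrac{m+1}{2}\pmod m$, the term $t_{i,\,j-\frac{m-1}{2}}$ appearing in~(\ref{eq:t}) equals $t_{i,\,j+\frac{m+1}{2}}$, so the displayed identity is nothing other than~(\ref{eq:t}) itself. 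This settles the single step for every index pair.

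Finally I would run the induction on $s$. The base case $s=0$ is the trivial identity $D_{i,j}=D_{i,j}$. For the upward step, assuming $D_{i,j}=D_{i-s,\,j+\frac{m+1}{2}s}$, I apply the single-step identity at the index pair $(i-s,\,j+\tfrac{m+1}{2}s)$ to obtain $D_{i-s,\,j+\frac{m+1}{2}s}=D_{i-(s+1),\,j+\frac{m+1}{2}(s+1)}$, which advances $s$ to $s+1$; the downward direction is identical, reading the single step in reverse. Because the single step holds for \emph{every} index pair, this reaches all integers $s$, both positive and negative, completing the proof. I do not expect a genuine obstacle here: the entire statement telescopes onto one application of~(\ref{eq:t}), and the only point demanding care is the modular index bookkeeping, namely the congruences $\frac{m-1}{2}+\frac{m+1}{2}\equiv 0$ and $-\frac{m-1}{2}\equiv\frac{m+1}{2}\pmod m$.
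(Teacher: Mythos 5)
Your proposal is correct and follows essentially the same route as the paper's proof: both reduce the statement to the single-step case (your $D_{i,j}=D_{i-1,\,j+\frac{m+1}{2}}$ is exactly the paper's $s=1$ instance of~(\ref{eq:t2}), which is~(\ref{eq:t}) via $-\frac{m-1}{2}\equiv\frac{m+1}{2}\pmod m$ and Lemma~\ref{welldefined}), and then propagate it to arbitrary $s$ by induction, applying the single step at the shifted index pair $(i-s,\,j+\frac{m+1}{2}s)$. The only cosmetic differences are your shorthand $D_{i,j}$ and your handling of negative $s$ by reversing the step, where the paper instead notes that indices modulo $m$ make non-negative $s$ sufficient.
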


\begin{proof}
Since the indices are computed modulo~$m$ it clearly suffices to prove that (\ref{eq:t2}) holds for all non-negative integers~$s$. The equality~(\ref{eq:t2}) of course holds for $s=0$. Moreover, by Lemma~\ref{welldefined} the table $T$ is distance magic (implying that (\ref{eq:t}) holds), and so the equality~(\ref{eq:t2}) holds also for $s=1$. Now, assuming that (\ref{eq:t2}) holds for some $s \ge 1$ and all $i,j$ with $0 \le i,j <m$ we prove that it also holds for $s+1$ and all $i,j$ with $0 \le i,j <m$. Set $i'=i-s$ and $j'=j+ \frac{m+1}{2}s$. By~(\ref{eq:t2}) we have that 
\begin{equation*}
\begin{split}
 t_{i+1, j} - t_{i, j+\frac{m-1}{2}} &= t_{i'+1, j'} - t_{i', j'+\frac{m-1}{2}} = t_{i', j'+\frac{m+1}{2}} - t_{i'-1,j'} \\
& =  t_{i+1-(s+1), j+\frac{m+1}{2}(s+1)} - t_{i-(s+1), j+\frac{m-1}{2}+\frac{m+1}{2}(s+1)},
\end{split}
\end{equation*}
where we used (\ref{eq:t2}) for $s=1$ in the second equality. Therefore, (\ref{eq:t2}) holds for $s + 1$ and all $i,j$ with $0 \le i,j <m$, completing the induction step.
\end{proof}

\begin{theorem} \label{lemma2}
Let $m\ge 3$ be an odd integer and let $\ell$ be a distance magic labeling of $C_m \square C_{2m}$. If $\ell_{0,0}=1$ then $\ell$ corresponds to an admissible pair. Moreover, the corresponding pair of sequences is uniquely determined by $\ell$.
\end{theorem}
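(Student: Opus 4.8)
The plan is to start from a distance magic labeling $\ell$ of $\Gamma = C_m \square C_{2m}$ with $\ell_{0,0}=1$ and to produce from its reduced table $T = re(L) = [t_{i,j}]$ the two sequences $\bold{a}$ and $\bold{b}$ required by Proposition~\ref{lemma1}, then verify that the pair $\langle \bold{a}, \bold{b}\rangle$ is admissible and that $T = \mathcal{T}(\bold{a}, \frac{m-1}{2}) + \mathcal{T}(\bold{b}, \frac{m+1}{2})$. By Lemma~\ref{welldefined} the table $T$ is distance magic, so it satisfies~(\ref{eq:t}) and the normalization~(\ref{eq:nn}); the crucial extra structural information is the ``invariant'' relation~(\ref{eq:t2}) from Lemma~\ref{lemma:t2}, which says that the differences $t_{i+1,j} - t_{i,j+\frac{m-1}{2}}$ are constant along the $s$-shift orbits. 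The idea is that an entry $t_{i,j}$ should split as $t_{i,j} = a_{?} + b_{?}$, where the $a$-part is constant on the lines of slope $\frac{m-1}{2}$ (the shift used for $A$ in Proposition~\ref{lemma1}) and the $b$-part is constant on the lines of slope $\frac{m+1}{2}$ (the shift used for $B$). Concretely I would define, for each $j$, the value $a_j$ to be the entry of $T$ lying in the $0$-th diagonal class for the $\frac{m-1}{2}$-shift, i.e. read off $a_j := t_{0,j} - t_{0,0} + 1$ or a similar normalized quantity, and analogously define $b_j$ from a $\frac{m+1}{2}$-shift diagonal; since $a_0 = 1$ and $b_0 = 0$ are forced by the normalization $t_{0,0}=\ell_{0,0}=1$, the constants work out.

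First I would make the decomposition precise. Given~(\ref{eq:t2}), the quantity $d := t_{i+1,j} - t_{i,j+\frac{m-1}{2}}$ depends only on the $\frac{m+1}{2}$-shift orbit of $(i,j)$, and likewise a complementary difference will be constant along the $\frac{m-1}{2}$-shift orbits; these two families of level sets partition the $m \times m$ grid because $\gcd(\frac{m-1}{2} - \frac{m+1}{2}, m) = \gcd(-1,m)=1$, so the two slope directions are distinct and together coordinatize the torus $\ZZ_m \times \ZZ_m$. I would then \emph{define}
\begin{equation*}
a_j := x_{0,j}, \qquad b_j := y_{0,j},
\end{equation*}
where $X = \mathcal{T}(\bold{a}, \frac{m-1}{2})$ and $Y = \mathcal{T}(\bold{b}, \frac{m+1}{2})$ are reconstructed so that $a_j$ records the part of $t_{0,j}$ that propagates along one slope and $b_j$ the part along the other, with the splitting pinned down by $a_0 = 1$, $b_0 = 0$. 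The key computation, which is the heart of the argument, is to show that with these definitions one genuinely recovers $t_{i,j} = x_{i,j} + y_{i,j}$ for \emph{all} $i,j$, not merely on the $0$-th row; this is exactly where~(\ref{eq:t2}) is used, by inducting on $i$ (equivalently on the shift parameter $s$) and tracking how the two diagonal invariants accumulate.

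Next I would check that the resulting sequences satisfy the arithmetic hypotheses of an admissible pair. The normalization~(\ref{eq:nn}) of $T$ says exactly one of $k,-k$ appears in $T$ for each $k \in \mathcal{N}_{2m^2}$; since $T = X + Y$ with $\{t_{i,j}\} = \{a_i + b_j\}$ (the indexing identity $c_{i-j,\,i+\frac{m-1}{2}(i-j)} = a_i + b_j$ established in the proof of Proposition~\ref{lemma1} shows the two multisets coincide), this transfers to the statement $\{x(a_i+b_j) : x\in\{-1,1\},\ 0\le i,j<m\} = \mathcal{N}_{2m^2}$. The parity conditions ($a_i$ odd, $b_j$ even) follow because the entries of $L$ are all odd --- every $\ell_{i,j}\in\mathcal{N}_{2m^2}$ is odd --- and the even/odd decomposition $t_{i,j} = a_i + b_j$ with $a_0=1$ odd, $b_0=0$ even forces the parity split once one observes that the $b$-part is determined modulo $2$ by a fixed column. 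Finally, uniqueness of the pair of sequences is almost immediate: the normalization $\ell_{0,0}=t_{0,0}=1$ forces $a_0=1$ and $b_0=0$, and then the decomposition $t_{i,j}=a_i+b_j$ together with the requirement that $a_i$ be odd and $b_j$ even determines every $a_i$ and $b_j$ without ambiguity (given $a_0,b_0$, each $a_i = t_{i-i,\cdot} - b_\cdot$ is pinned down by a single entry of $T$).

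The main obstacle I anticipate is the middle step: proving that the two diagonal invariants coming from~(\ref{eq:t2}) actually \emph{add up} to reconstruct $T$ on the whole grid rather than just giving consistency on a single row or column. One must verify that the two slope directions $\frac{m-1}{2}$ and $\frac{m+1}{2}$ are complementary (coprimality argument above), that the accumulated telescoping sum of differences~(\ref{eq:t2}) is path-independent on the torus, and that the induction closes without a parity obstruction; I expect this to require a careful but ultimately routine double induction on $i$ and $j$ using Lemma~\ref{lemma:t2} for the $s=1$ increment, mirroring the telescoping already exhibited in the proof of that lemma.
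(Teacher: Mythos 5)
Your plan has the same architecture as the paper's proof: extract the two sequences from the reduced table $T=re(L)$ normalized by $a_0=1$, $b_0=0$, prove $T = \mathcal{T}(\bold{a},\frac{m-1}{2})+\mathcal{T}(\bold{b},\frac{m+1}{2})$ by an induction whose increment is Lemma~\ref{lemma:t2}, transfer~(\ref{eq:nn}) to the admissibility condition by counting sums, and get uniqueness from the normalized entries. However, the one formula you actually commit to is wrong in a way that breaks the construction. You propose $a_j := t_{0,j}-t_{0,0}+1$, i.e.\ reading the sequences off the $0$-th \emph{row} of $T$. But in the target decomposition $t_{0,j}=a_j+b_j$: the $0$-th row only sees the sums, and no additive renormalization of it can separate $a_j$ from $b_j$ (your formula returns $a_j+b_j$, which equals $a_j$ only when $b_j=0$). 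The sequences must instead be read along the two diagonals through $(0,0)$, namely
\begin{equation*}
a_i := t_{i,\frac{m+1}{2}i} \quad \textrm{and} \quad b_i := t_{m-i,\frac{m+1}{2}i}-1,
\end{equation*}
because along the first of these the $\bold{b}$-contribution is constantly $b_0=0$, and along the second the $\bold{a}$-contribution is constantly $a_0=1$. Relatedly, you have the two diagonal families interchanged: the $0$-th class of the $\frac{m-1}{2}$-shift is where the $\bold{a}$-part is constant, so it carries the $\bold{b}$-values, not the $\bold{a}$-values.

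The second gap is that the step you yourself call the heart of the argument --- that the diagonal reads reconstruct $T$ on the \emph{whole} grid --- is only asserted, with an appeal to a ``routine double induction''. This is where essentially all the work of the paper's proof lies. With the corrected definitions it amounts to proving
\begin{equation*}
t_{k,\frac{m+1}{2}k} + t_{i-k,\,\frac{m-1}{2}i+\frac{m+1}{2}k} - 1 = t_{i,\frac{m-1}{2}i+k}
\end{equation*}
for all $i$ and all $k \ge 0$, by a single induction on $k$: the base case $k=0$ is exactly $t_{0,0}=1$, and the inductive step combines the hypothesis applied at $i-1$ with one application of Lemma~\ref{lemma:t2}. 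Your remaining points are sound once the definitions are repaired: the parity and range conditions are immediate (every $a_i$ is an entry of $L$, hence odd, and every $b_i$ is such an entry minus $1$), and uniqueness follows just as you say --- since $0$ and $1$ must sit at index $0$ of $\bold{b'}$ and $\bold{a'}$ in any admissible pair generating $T$, each $a_i$ and $b_i$ is pinned down by a single entry of $T$, which is precisely the paper's uniqueness argument. So: right strategy, but the committed definition fails, and the central induction remains to be carried out.
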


\begin{proof}
Let $C=re(L)=[c_{i,j}]_{0 \le i,j <m}$, where $L$ is the $m \times 2m$ table corresponding to~$\ell$, and let $\bold{a}$ and $\bold{b}$ be defined as follows:
\setlength{\belowdisplayskip}{7pt}  \setlength{\belowdisplayshortskip}{7pt} 
\setlength{\abovedisplayskip}{7pt}  \setlength{\abovedisplayshortskip}{-3pt}
\begin{equation*}
\bold{a} = [c_{i,\frac{m+1}{2}i} \colon 0 \le i < m],
\quad\quad
\bold{b} = [c_{m-i,\frac{m+1}{2}i}-1 \colon 0 \le i < m].
\end{equation*}
Letting $A=\mathcal{T}(\bold{a}, \frac{m-1}{2})$ and $B=\mathcal{T}(\bold{b}, \frac{m+1}{2})$ we claim that $A+B=C$. It clearly suffices to show that
\begin{equation} \label{eq:induction}
a_{i, \frac{m-1}{2}i+k} + b_{i, \frac{m-1}{2}i+k} = c_{i, \frac{m-1}{2}i+k}
\end{equation}
holds for all integers $i$, $k$ with $k \ge 0$. In fact, since
$$a_{i, \frac{m-1}{2}i+k} + b_{i, \frac{m-1}{2}i+k} = a_k + b_{k-i} = c_{k, \frac{m+1}{2}k} + c_{i-k, \frac{m-1}{2}i+\frac{m+1}{2}k}-1,$$
(\ref{eq:induction}) can be restated as
\begin{equation} \label{eq:induction2}
c_{k, \frac{m+1}{2}k} + c_{i-k, \frac{m-1}{2}i+\frac{m+1}{2}k}-1 = c_{i, \frac{m-1}{2}i+k}.
\end{equation}
We perform the proof that ($\ref{eq:induction2}$) indeed holds for all integers $i$ and $k$, with $k \ge 0$, by induction on~$k$. That $(\ref{eq:induction2})$ holds for $k=0$ and for all integers~$i$ follows from the fact that $c_{0,0}=1$. 
Next, suppose that (\ref{eq:induction2}) holds for some $k \ge 0$ and for all~$i$. We claim that for all~$i$ we have that
\begin{equation} \label{eq:induction3}
c_{k+1, \frac{m+1}{2}k +\frac{m+1}{2}} + c_{i-k-1, \frac{m-1}{2}i + \frac{m+1}{2}k+\frac{m+1}{2}} - 1 = c_{i, \frac{m-1}{2}i+k+1}.
\end{equation}
Setting $i'=i-1$ we see that the inductive hypothesis implies that 
\begin{equation*}
\begin{split}
 c_{i-k-1, \frac{m-1}{2}i + \frac{m+1}{2}k+\frac{m+1}{2}} - 1 &= c_{i'-k, \frac{m-1}{2}i'+\frac{m+1}{2}k}-1 \\
& = c_{i', \frac{m-1}{2}i'+k} - c_{k, \frac{m+1}{2}k} \\
& = c_{i-1, \frac{m-1}{2}i+k+\frac{m+1}{2}} - c_{k, \frac{m+1}{2}k}, 
\end{split}
\end{equation*}
and so ($\ref{eq:induction3}$) is equivalent to 
$$c_{k+1, \frac{m+1}{2}k +\frac{m+1}{2}} - c_{k, \frac{m+1}{2}k} = c_{i, \frac{m-1}{2}i+k+1} - c_{i-1, \frac{m-1}{2}i+k+\frac{m+1}{2}}. $$
Setting $i'=k$ and $j=\frac{m+1}{2}k +\frac{m+1}{2}$ and then applying Lemma~\ref{lemma:t2} for $s=i'+1-i$ we have that
\begin{equation*}
\begin{split}
c_{k+1, \frac{m+1}{2}k +\frac{m+1}{2}} - c_{k, \frac{m+1}{2}k} & =  c_{i'+1, j} - c_{i', j+ \frac{m-1}{2}} \\
& = c_{i,j+\frac{m+1}{2}i' + \frac{m+1}{2} - \frac{m+1}{2}i} - c_{i-1, j + \frac{m+1}{2}i' - \frac{m+1}{2}i } \\
& = c_{i, \frac{m-1}{2}i+k+1} - c_{i-1, \frac{m-1}{2}i+k+\frac{m+1}{2}},
\end{split}
\end{equation*}
which implies that (\ref{eq:induction3}) indeed holds for all~$i$ thus confirming that (\ref{eq:induction2}) (and hence (\ref{eq:induction})) holds for all integers~$i$, $k$ with $k \ge 0$.

By~(\ref{eq:induction}) each~$c_{i,j}$ is of the form $a_{i'}+b_{j'}$ for some $i', j'$. Since there are at most $m^2$ different sums of the form $a_{i'}+b_{j'}$ and (\ref{eq:nn}) holds for the table~$C$, it clearly follows that $\{x(a_{i'}+b_{j'})\colon x \in \{-1, 1\}, 0 \le i',j' <m  \}=\mathcal{N}_{2m^2}$, and so the pair $\langle \bold{a}, \bold{b} \rangle$ is admissible. 

We are now left with the proof that the above pair $\left< \bold{a}, \bold{b} \right>$ is the only admissible pair of sequences generating the table~$C$. Suppose that $\left< \bold{a'}, \bold{b'} \right>$ is an admissible pair of sequences generating~$C$. Let $A'=\mathcal{T}(\bold{a'}, \frac{m-1}{2})$ and $B'=\mathcal{T}(\bold{b'}, \frac{m+1}{2})$. Since $0$ appears in $B$ and in $B'$ at the positions $\{ \left( i,\frac{m+1}{2}i \right) \colon 0 \le i < m \}$, and these positions coincide with the entries $\{a_i \colon 0 \le i < m\}$ in~$A$, as well as with the entries $\{a'_i \colon 0 \le i < m\}$ in~$A'$, we have that $a_i=a'_i$ for all $0 \le i <m$. Similarly, since $1$ appears in $A$ and in $A'$ at the positions $\{ \left( i,\frac{m-1}{2}i \right) \colon 0 \le i < m \}$ and these positions coincide with the entries $\{b_{m-i} \colon 0 \le i < m\}$ in~$B$, as well as with the entries  $\{b'_{m-i} \colon 0 \le i < m\}$ in~$B'$, we have that $b_i=b'_i$ for all $0 \le i <m$. Therefore,  $\left< \bold{a},\bold{b} \right> =  \left< \bold{a'},\bold{b'} \right>$, implying that there exists a unique admissible pair of sequences $\left< \bold{a},\bold{b} \right>$ generating the table~$C$.
\end{proof} 

The next observation follows directly from the definition of an admissible pair.

\begin{lemma} \label{lemma:tilde}
Let $m \ge 3$ be an odd integer and let $\left<\bold{a},\bold{b}\right>$ be an admissible pair. Set 
\setlength{\belowdisplayskip}{7pt}  \setlength{\belowdisplayshortskip}{7pt} 
\setlength{\abovedisplayskip}{7pt}  \setlength{\abovedisplayshortskip}{-3pt}
$$ \bold{\tilde{a}}~=~[b_i+1 \colon 0 \le i <m] \quad \textrm{and} \quad \bold{\tilde{b}}=[a_i-1 \colon 0 \le i <m].$$ 
Then the pair $\langle\bold{\tilde{a}},\bold{\tilde{b}}\rangle$ is also admissible.
\end{lemma}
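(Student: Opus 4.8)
The plan is to verify directly that the pair $\langle \bold{\tilde{a}}, \bold{\tilde{b}} \rangle$ satisfies each of the defining conditions of an admissible pair collected in Proposition~\ref{lemma1}: that the entries of $\bold{\tilde{a}}$ are odd integers in $\{1-2m^2, \ldots, 2m^2-1\}$ with $\tilde{a}_0 = 1$, that the entries of $\bold{\tilde{b}}$ are even integers in $\{-2m^2, \ldots, 2m^2-2\}$ with $\tilde{b}_0 = 0$, and that $\{x(\tilde{a}_i + \tilde{b}_j) \colon x \in \{-1,1\},\, 0 \le i,j < m\} = \mathcal{N}_{2m^2}$. Everything will follow mechanically from the defining relations $\tilde{a}_i = b_i + 1$ and $\tilde{b}_i = a_i - 1$ together with the admissibility of $\langle \bold{a}, \bold{b} \rangle$.

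First I would dispose of the parity, normalization and range conditions, all of which are immediate. Since each $b_i$ is even, each $\tilde{a}_i = b_i + 1$ is odd, and $\tilde{a}_0 = b_0 + 1 = 1$; dually, since each $a_i$ is odd, each $\tilde{b}_i = a_i - 1$ is even, and $\tilde{b}_0 = a_0 - 1 = 0$. For the ranges, adding $1$ to the range $\{-2m^2, \ldots, 2m^2-2\}$ of $\bold{b}$ yields exactly $\{1-2m^2, \ldots, 2m^2-1\}$, while subtracting $1$ from the range $\{1-2m^2, \ldots, 2m^2-1\}$ of $\bold{a}$ yields exactly $\{-2m^2, \ldots, 2m^2-2\}$, so $\bold{\tilde{a}}$ and $\bold{\tilde{b}}$ lie in the required ranges.

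The only remaining, and genuinely decisive, point is the sum condition, and it rests on the single identity
$$\tilde{a}_i + \tilde{b}_j = (b_i + 1) + (a_j - 1) = a_j + b_i.$$
Thus as $(i,j)$ ranges over all pairs in $\{0, \ldots, m-1\}^2$ the sums $\tilde{a}_i + \tilde{b}_j$ run through precisely the sums $a_j + b_i$, so after the harmless reindexing $(i,j) \mapsto (j,i)$ we obtain
$$\{x(\tilde{a}_i + \tilde{b}_j) \colon x \in \{-1,1\},\, 0 \le i,j < m\} = \{x(a_i + b_j) \colon x \in \{-1,1\},\, 0 \le i,j < m\} = \mathcal{N}_{2m^2},$$
the last equality being the admissibility of $\langle \bold{a}, \bold{b} \rangle$. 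This confirms the final condition and hence that $\langle \bold{\tilde{a}}, \bold{\tilde{b}} \rangle$ is admissible. There is no real obstacle here: the content of the lemma is simply that the two sequences enter the sum condition symmetrically, the compensating $\pm 1$ shifts being exactly what is needed to preserve the parity and normalization conventions $\tilde{a}_0 = 1$ and $\tilde{b}_0 = 0$.
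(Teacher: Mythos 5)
Your proof is correct and is precisely the direct verification the paper has in mind: the paper states Lemma~\ref{lemma:tilde} without proof, remarking only that it ``follows directly from the definition of an admissible pair.'' Your check of the parity, normalization, range, and sum conditions (the last via $\tilde{a}_i + \tilde{b}_j = a_j + b_i$) is exactly that omitted routine verification.
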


We proceed by introducing two specific admissible pairs $\left<\bold{a},\bold{b}\right>$ which will be called {\em basic pairs} in the rest of this section (see Lemma~\ref{lemma:type1} and Lemma~\ref{lemma:type2}). In addition, we introduce so-called {\em derived pairs}, obtained from basic pairs  $\langle \bold{a}, \bold{b}\rangle$ in such a way that one of $\bold{a}$ and $\bold{b}$ is fixed and the other is changed. In particular, {\em $\bold{a}$-derived pairs} are the pairs of the form $\left<\bold{a},\bold{b'}\right>$, where some of the entries of $\bold{b'}$ differ from those of $\bold{b}$ (in a specified way, see Lemma~\ref{lemma:type1} and Lemma~\ref{lemma:type2}). We define {\em $\bold{b}$-derived pairs} analogously.

\begin{lemma} 
\label{lemma:type1}
Let $m \ge 3$ be an odd integer and let 
\setlength{\belowdisplayskip}{7pt}  \setlength{\belowdisplayshortskip}{7pt} 
\setlength{\abovedisplayskip}{7pt}  \setlength{\abovedisplayshortskip}{-3pt}
$$\bold{a} = [1+2i \colon 0 \le i < m] \quad \textrm{and} \quad \bold{b} = [2im \colon 0 \le i < m].$$ 
Then the pair $\left< \bold{a},\bold{b} \right>$ is admissible. Moreover, if any number of elements $b_i \in \bold{b}$ other than $b_0$ are replaced by $b_i'=-b_i-2m$, the $\bold{a}$-derived pair $\left< \bold{a},\bold{b'} \right>$ is admissible. Similarly, if any number of elements $a_i \in \bold{a}$ other than $a_0$ are replaced by $a_i'=-a_i-2(m-1)m$, the $\bold{b}$-derived pair $\left< \bold{a'},\bold{b} \right>$ is admissible. 
\end{lemma}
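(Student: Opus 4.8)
The plan is to first reduce admissibility to its essential content. Unwinding the definition used in Proposition~\ref{lemma1}, a pair $\langle \bold{a}, \bold{b}\rangle$ is admissible precisely when three bookkeeping conditions hold (each $a_i$ is odd and each $b_j$ is even, both lying in the prescribed ranges, with the normalizations $a_0=1$ and $b_0=0$) together with the single substantive condition $\{x(a_i+b_j) \colon x \in \{-1,1\},\ 0 \le i,j <m\} = \mathcal{N}_{2m^2}$. For the basic pair the bookkeeping is immediate, and the substantive condition is exactly the statement of Proposition~\ref{example}; so I would simply cite Proposition~\ref{example} for the admissibility of $\langle \bold{a}, \bold{b}\rangle$ and devote the real work to the derived pairs.

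The key structural fact I would exploit is that for the basic pair $a_i+b_j = (1+2i)+2jm = 1+2(i+jm)$, so the map $(i,j)\mapsto i+jm$ identifies the $m^2$ sums bijectively with $\{0,1,\ldots,m^2-1\}$ (base-$m$ digits). From this I would extract the two sign-flip identities that drive everything. A short computation gives
$$a_i + b_j' = -(a_{m-1-i} + b_j) \quad \text{where } b_j' = -b_j - 2m,$$
and symmetrically
$$a_i' + b_j = -(a_i + b_{m-1-j}) \quad \text{where } a_i' = -a_i - 2(m-1)m,$$
and in each identity the index shift ($i \mapsto m-1-i$, respectively $j \mapsto m-1-j$) is a bijection of $\{0,\ldots,m-1\}$.

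Granting these identities, admissibility of the derived pairs is almost formal. Replacing $b_j$ by $b_j'$ for $j$ in some subset $J \subseteq \{1,\ldots,m-1\}$ alters the grid of sums only in the columns indexed by $J$, and by the first identity each such column becomes the negation of a column of the original grid. Since the defining condition symmetrizes over $x\in\{-1,1\}$, these sign flips leave the set $\{x(a_i+b_j^{\mathrm{new}})\}$ unchanged, hence still equal to $\mathcal{N}_{2m^2}$; crucially this works for any subset $J$ at once, not merely one substitution at a time. The row version of the same argument, using the second identity, handles the $\bold{b}$-derived pairs. It then remains to verify the bookkeeping for the altered entries: that $b_j' = -2(j+1)m$ is even and lies in $\{-2m^2,\ldots,2m^2-2\}$ (indeed in $[-2m^2,-4m]$ for $1\le j\le m-1$), and that $a_i' = -1-2i-2(m-1)m$ is odd and lies in $\{1-2m^2,\ldots,2m^2-1\}$ (its minimum $1-2m^2$ being attained at $i=m-1$). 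The normalizations $a_0=1$ and $b_0=0$ are untouched because the index $0$ is excluded from the substitutions.

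The main obstacle is verifying the two sign-flip identities cleanly; everything else is routine range-and-parity checking. Once $a_i+b_j' = -(a_{m-1-i}+b_j)$ is in hand, the decisive point is that the $\pm$-symmetrization absorbs column (and row) sign changes, which is exactly what makes an \emph{arbitrary} collection of substitutions simultaneously permissible.
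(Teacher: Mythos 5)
Your proof is correct and takes essentially the same approach as the paper: admissibility of the basic pair is quoted from Proposition~\ref{example}, and the derived pairs are handled via the sign-flip identity $a_i+b_j'=-(a_{m-1-i}+b_j)$, which the paper states in the equivalent set form $\{-(a_j+b_i')\colon 0\le j<m\}=\{a_{j'}+b_i\colon 0\le j'<m\}$, with the $\pm$-symmetrization in the definition of admissibility absorbing an arbitrary collection of column (or row) substitutions at once. The only differences are presentational: you make the index bijection, the range and parity checks, and the $\bold{b}$-derived case explicit, where the paper leaves the latter to the reader.
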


\begin{proof}
By Proposition~\ref{example} the pair  $\left< \bold{a},\bold{b} \right>$ is admissible.  Let~$i$ be such that $0 \le i <m$, set $b_i'=-b_i-2m$ and note that $b_{i'} \in \{-2m^2, \ldots, 2m^2-2\}$. Then 
$$ \{-(a_j+ b_i') \colon 0 \le j < m\} = \{-1-2j+b_i+2m \colon 0 \le j < m\} = \{a_{j'}+b_i \colon 0 \le j' < m\}.$$
\noindent
Therefore, an $\bold{a}$-derived pair $\left< \bold{a},\bold{b'} \right>$, obtained by replacing any number of $b_i \in \bold{b}$ by the corresponding~$b_i'$, satisfies the conditions of Proposition~\ref{lemma1} and is thus admissible. An analogous proof for $\bold{b}$-derived pairs $\left< \bold{a'},\bold{b} \right>$ is left to the reader.
\end{proof}

\begin{table}[h!]
\centering
\caption{The basic pair $\left< \bold{a},\bold{b} \right>$  and possible substitutions for $\bold{b}$-derived and $\bold{a}$-derived pairs according to Lemma~\ref{lemma:type1}.}
\begin{adjustbox}{width=1\textwidth}
\label{table:type1}
\begin{tabular}{|c|C{0.5cm}|*{5}{c|}}
\hline
   $i$ & 0 & 1 & \ldots & i & $ \ldots$ & $m-1$ \\ \hline\hline
   $\bold{a}, \bold{a'}$ & 1 & 3, $-3-2(m-1)m$ & \ldots & $1+2i$, $-1-2i-2(m-1)m$ & \ldots & $2m-1$, $1-2m^2$ \\ \hline
   $\bold{b}, \bold{b'}$ & 0 & $2m$, $-4m$ & \ldots & $2im$, $-2im-2m$ & \ldots & $2(m-1)m$, $-2m^2$ \\ \hline
\end{tabular}
\end{adjustbox}
\end{table}

\begin{lemma} 
\label{lemma:type2}
Let $m \ge 3$ be an odd integer and let 
\setlength{\belowdisplayskip}{7pt}  \setlength{\belowdisplayshortskip}{7pt} 
\setlength{\abovedisplayskip}{7pt}  \setlength{\abovedisplayshortskip}{-3pt}
$$\bold{a} = [(-1)^i(1+2i) \colon 0 \le i < m] \quad \textrm{and} \quad \bold{b} = [(-1)^i(1+2i)m-m \colon 0 \le i < m].$$ 
Then the pair $\left< \bold{a},\bold{b} \right>$ is admissible. Moreover, if any number of elements $b_i \in \bold{b}$ other than $b_0$ are replaced by $b_i'=-b_i-2$, the $\bold{a}$-derived pair $\left< \bold{a},\bold{b'} \right>$ is admissible. Similarly, if any number of elements $a_i \in \bold{a}$ other than $a_0$ are replaced by $a_i'=-a_i$, the $\bold{b}$-derived pair $\left< \bold{a'},\bold{b} \right>$ is admissible. 
\end{lemma}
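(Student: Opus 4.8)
The plan is to verify, for each of the three families of pairs, the three admissibility conditions recorded in Proposition~\ref{lemma1}; the only substantial one is that the signed sum-set $\{x(a_i+b_j) : x \in \{-1,1\},\ 0 \le i,j < m\}$ equals $\mathcal{N}_{2m^2}$. Throughout I would write $\alpha_k = (-1)^k(1+2k)$, so that $a_k = \alpha_k$ and $b_k = (\alpha_k-1)m$, whence $a_i + b_j = \alpha_i + (\alpha_j-1)m$. Two structural facts drive everything: first, every $\alpha_k \equiv 1 \pmod 4$ (the factor $(-1)^k$ exactly compensates the residue of $1+2k$ modulo $4$); and second, the set $A = \{\alpha_k : 0 \le k < m\} = \{1,5,\dots,2m-1\} \cup \{-3,-7,\dots,-(2m-3)\}$ is invariant under the involution $\alpha \mapsto 2-\alpha$, while the set $\{\alpha_k - 1 : 0 \le k < m\} = \{0,\pm 4,\pm 8,\dots,\pm(2m-2)\}$ is invariant under negation.

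For the basic pair I would first dispatch the easy conditions: $a_0 = \alpha_0 = 1$ and $b_0 = 0$, every $a_i$ is odd with $|a_i| \le 2m-1$, every $b_j$ is even with $|b_j| \le 2m^2 - 2m$, and every sum is odd with $|a_i + b_j| \le (2m-1) + (2m-2)m = 2m^2 - 1$. Since $\mathcal{N}_{2m^2}$ has $2m^2$ elements and is symmetric under negation with no fixed point, it then suffices to prove that the $m^2$ sums $a_i + b_j$ are pairwise distinct and that none is the negative of another, as the counting then forces $\{x(a_i+b_j)\} = \mathcal{N}_{2m^2}$. If $a_i + b_j = a_{i'} + b_{j'}$, this rearranges to $\alpha_i - \alpha_{i'} = (\alpha_{j'} - \alpha_j)m$; the left side has absolute value at most $4m-4 < 4m$ and is divisible by $4$ (a difference of two integers $\equiv 1 \pmod 4$), forcing $\alpha_{j'} - \alpha_j = 0$ and then $i = i'$, $j = j'$. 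If instead $a_i + b_j = -(a_{i'} + b_{j'})$, this rearranges to $\alpha_i + \alpha_{i'} = (2 - \alpha_j - \alpha_{j'})m$, whose left side is $\equiv 2 \pmod 4$ while the right side is $\equiv 0 \pmod 4$ (as $m$ is odd), a contradiction. Hence the basic pair is admissible.

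For the derived pairs I would mimic the proof of Lemma~\ref{lemma:type1}, showing that each permitted substitution negates one line of the sum table as a set, which leaves the signed sum-set untouched. For an $\bold{a}$-derived pair, replacing $b_j$ by $b_j' = -b_j - 2$ gives $-(a_i + b_j') = (2-\alpha_i) + b_j$, so as $i$ ranges over all indices the involution $\alpha \mapsto 2-\alpha$ on $A$ yields $\{-(a_i + b_j') : 0 \le i < m\} = \{a_{i'} + b_j : 0 \le i' < m\}$, i.e.\ the $j$-th column is replaced by its negative. For a $\bold{b}$-derived pair, replacing $a_i$ by $a_i' = -a_i$ gives $-(a_i' + b_j) = a_i - b_j$, and since $\{b_j\}$ is symmetric under negation the $i$-th row is likewise replaced by its negative. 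In either case, performing any number of such substitutions, never touching $a_0$ or $b_0$ so that the normalization $a_0 = 1$, $b_0 = 0$ survives (and a quick check confirms the substituted values stay of the required parity and within range), leaves $\{x(a_i + b_j)\}$ equal to $\mathcal{N}_{2m^2}$, so the derived pairs are admissible by Proposition~\ref{lemma1}.

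The main obstacle is the distinctness/no-negation argument for the basic pair; everything hinges on the congruence $\alpha_k \equiv 1 \pmod 4$, which simultaneously rules out accidental coincidences of sums (a multiple of $4$ against a bounded quantity) and rules out a sum equaling the negative of another (a clash of $2 \pmod 4$ against $0 \pmod 4$). Once this observation is isolated, the derived cases are routine, reducing to the two elementary symmetries of $A$ and of $\{\alpha_k - 1\}$ noted at the outset.
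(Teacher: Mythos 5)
Your proof is correct and follows essentially the same route as the paper: verify the basic pair via Proposition~\ref{lemma1}, then show that each permitted substitution negates one row or column of the sum table as a set (your involution $\alpha \mapsto 2-\alpha$ is exactly the paper's index map $j \mapsto j-(-1)^j$, and the $\bold{b}$-derived case likewise rests on the negation-symmetry of $\{b_j\}$). The only difference is that you spell out, via the mod-$4$ congruence and a counting argument, the basic-pair verification that the paper dismisses as ``easy to verify,'' which is a welcome addition rather than a departure.
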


\begin{proof}
Note that $\bold{b}=[0, -4m, 4m, -8m, 8m, \ldots, -2(m-1)m, 2(m-1)m]$ (recall that $m$ is odd). It is now easy to verify that the pair  $\left< \bold{a},\bold{b} \right>$ satisfies the conditions of Proposition~\ref{lemma1} and is thus admissible. Moreover, this also shows that all $\bold{b}$-derived pairs clearly satisfy the conditions of Proposition~\ref{lemma1} and are thus admissible. For $\bold{a}$-derived pairs, let~$i$ be such that $0 \le i <m$ and set $b_i'=-b_i-2$. Then 
$$ \{-(a_j+ b_i') \colon 0 \le j < m\} = \{(-1)^j(-1-2j)+b_i+2 \colon 0 \le j < m\}.$$
\noindent
For $j=0$ we have that $-(a_0+b_i')=a_0+b_i$. For $j > 0$ set $j'=j-(-1)^j$, note that $j$ and $j'$ have different parity and that (since $m$ is odd) $\{j' \colon 1 \le j < m\}=\{j \colon 1 \le j < m\}$. Thus
\begin{equation*}
\begin{split}
\{-(a_j+ b_i') \colon 1 \le j < m\} & = \{(-1)^{j'}(1+2j'-2(-1)^{j'})+b_i+2 \colon 1 \le j' < m\} \\
						& =  \{a_{j'}+ b_i \colon 1 \le j' < m\}.
\end{split}
\end{equation*}
Therefore, an $\bold{a}$-derived pair $\left< \bold{a},\bold{b'} \right>$, obtained by replacing any number of $b_i \in \bold{b}$ by the corresponding~$b_i'$, satisfies the conditions of Proposition~\ref{lemma1} and is thus admissible.
\end{proof}

\begin{table}[h!]
\centering
\caption{The basic pair $\left< \bold{a},\bold{b} \right>$ and possible substitutions for $\bold{b}$-derived and $\bold{a}$-derived pairs according to Lemma~\ref{lemma:type2}.}
\begin{adjustbox}{width=1\textwidth}
\label{table:type2}
\begin{tabular}{|c|C{0.5cm}|*{5}{c|}}
\hline
   $i$ & 0 & 1 & \ldots & i & $ \ldots$ & $m-1$ \\ \hline\hline
   $\bold{a}, \bold{a'}$ & 1 & $-3$,3 & \ldots & $(-1)^i(1+2i)$, $(-1)^{i+1}(1+2i)$ & \ldots & $2m-1$, $1-2m$ \\ \hline
   $\bold{b}, \bold{b'}$ & 0 & $-4m$, $4m-2$ & \ldots & $(-1)^i(1+2i)m-m$, $(-1)^{i+1}(1+2i)m+m-2$ & \ldots & $2(m-1)m$, $2(1-m)m-2$ \\ \hline
\end{tabular}
\end{adjustbox}
\end{table}

Table~\ref{table:type1} and Table~\ref{table:type2} show how one can construct the basic and all derived pairs from Lemma~\ref{lemma:type1} and from Lemma~\ref{lemma:type2}, respectively. In particular, each sequence in the basic pair starts with 0 or 1, taking the leftmost element in the column for each subsequent entry while each sequence in a derived pair starts with 0 or 1, taking the rightmost element in the column for at least one subsequent entry.

\begin{proposition} \label{lemma4}
Let $m \ge 3$ be an odd integer and let $\Gamma= C_m \square C_{2m}$. Then there are at least $4m^2 \cdot (2^{m+1}-3) \cdot (m-1)!^2$ different distance magic labelings of $\Gamma$.
\end{proposition}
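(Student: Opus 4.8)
The plan is to reduce the count to the number of admissible pairs and then bound the latter from below using the families of Lemma~\ref{lemma:type1}, Lemma~\ref{lemma:type2} and Lemma~\ref{lemma:tilde}. First I would record that the translation group $\ZZ_m \times \ZZ_{2m}$ acts on the set $\mathcal{D}$ of all distance magic labelings of $\Gamma$, since translations are graph automorphisms and hence preserve the distance magic property. This action is free: a translation fixing a labeling $\ell$ must fix the unique vertex carrying the label $1$, so it is trivial. Consequently every orbit has size $2m^2$ and contains exactly one labeling with value $1$ at $(0,0)$, namely the translate sending the vertex labeled $1$ to the origin. Therefore $|\mathcal{D}| = 2m^2 \cdot |\mathcal{D}_1|$, where $\mathcal{D}_1 = \{\ell \in \mathcal{D} \colon \ell_{0,0}=1\}$. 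I would also note that label negation contributes nothing new: by Corollary~\ref{lemma:subm2m} we have $\ell_{i,j}=-\ell_{i,j+m}$, so $-\ell$ is merely the translate of $\ell$ by $(0,m)$, already counted.

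By Theorem~\ref{lemma2} together with Proposition~\ref{lemma1}, the map $\ell \mapsto \langle \bold{a},\bold{b}\rangle$ is a bijection from $\mathcal{D}_1$ to the set of all admissible pairs: Theorem~\ref{lemma2} attaches to each $\ell \in \mathcal{D}_1$ a unique admissible pair, while Proposition~\ref{lemma1} turns an admissible pair into a distance magic table $T$ with $t_{0,0}=a_0+b_0=1$, hence a labeling in $\mathcal{D}_1$, and the two constructions are mutually inverse by the uniqueness in Theorem~\ref{lemma2}. Next I would observe that admissibility of $\langle\bold{a},\bold{b}\rangle$ depends only on the underlying sets $\{a_i\}$ and $\{b_i\}$ (the condition $\{x(a_i+b_j)\}=\mathcal{N}_{2m^2}$ is symmetric in the indices) together with $a_0=1$ and $b_0=0$. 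Thus each admissible pair of sets yields exactly $(m-1)!^2$ admissible pairs of sequences, obtained by independently permuting $a_1,\dots,a_{m-1}$ and $b_1,\dots,b_{m-1}$, and these are pairwise distinct. Writing $P$ for the number of admissible pairs of sets, we obtain $|\mathcal{D}| = 2m^2 \cdot (m-1)!^2 \cdot P$, so it suffices to prove $P \ge 2(2^{m+1}-3)=2^{m+2}-6$.

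To bound $P$ I would use four families of admissible set pairs: the Type~1 family $\mathcal{F}_1$ from Lemma~\ref{lemma:type1}, the Type~2 family $\mathcal{F}_2$ from Lemma~\ref{lemma:type2}, and their images $\tilde{\mathcal{F}}_1,\tilde{\mathcal{F}}_2$ under the involution of Lemma~\ref{lemma:tilde}. Each of $\mathcal{F}_1$ and $\mathcal{F}_2$ consists of the $\bold{a}$-derived pairs (fixed $\bold{a}$, one of $2^{m-1}$ choices of the $\bold{b}$-set) together with the $\bold{b}$-derived pairs (fixed $\bold{b}$, one of $2^{m-1}$ choices of the $\bold{a}$-set), overlapping only in the basic pair; hence each family has $2^{m-1}+2^{m-1}-1=2^m-1$ set pairs, and since tilde is a bijection so do $\tilde{\mathcal{F}}_1$ and $\tilde{\mathcal{F}}_2$. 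I would then pin down the coincidences: the pair formed by the Type~1 basic $\bold{a}$-set $\{1,3,\dots,2m-1\}$ and the Type~2 basic $\bold{b}$-set $\{0,\pm 4m,\dots,\pm 2(m-1)m\}$ lies in $\mathcal{F}_1\cap\mathcal{F}_2$ (it is $\bold{a}$-derived for Type~1 and, via the all-positive sign pattern, $\bold{b}$-derived for Type~2), and applying tilde produces the single analogous element of $\tilde{\mathcal{F}}_1\cap\tilde{\mathcal{F}}_2$. Inclusion–exclusion then gives $P \ge 4(2^m-1)-2=2^{m+2}-6$, and multiplying by $2m^2(m-1)!^2$ yields the claimed bound.

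The main obstacle is the intersection bookkeeping in the last step: one must verify that these four families meet only in the two singletons just described, that is, that $\mathcal{F}_1\cap\tilde{\mathcal{F}}_1$, $\mathcal{F}_2\cap\tilde{\mathcal{F}}_2$, $\mathcal{F}_1\cap\tilde{\mathcal{F}}_2$ and $\tilde{\mathcal{F}}_1\cap\mathcal{F}_2$ are empty and that no triple overlaps occur, since any unaccounted coincidence would push $P$ below the target. This amounts to comparing the explicit $\bold{a}$-sets (consecutive odds, sign patterns of the odds, and the shifted sets $B_0^{(i)}+1$) and $\bold{b}$-sets parity- and magnitude-wise, distinguishing the small odd or even entries from the large multiples of $m$ appearing in the Type~1 and Type~2 constructions; the arithmetic is routine but must be carried out carefully to certify the lower bound.
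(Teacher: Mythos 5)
Your proposal is correct and follows essentially the same route as the paper: factor out the $2m^2$ translates, use Theorem~\ref{lemma2} and Proposition~\ref{lemma1} to reduce to counting admissible pairs of sets times $(m-1)!^2$, and count the pairs arising from Lemma~\ref{lemma:type1}, Lemma~\ref{lemma:type2} and Lemma~\ref{lemma:tilde}, with exactly the two coincidences you identify (the pair with Type-1 basic $\mathbf{a}$-set and Type-2 basic $\mathbf{b}$-set, and its tilde image), yielding $2(2^{m+1}-3)$ admissible pairs of sets. The intersection bookkeeping you defer as routine is precisely the portion the paper's proof carries out explicitly, distinguishing the families by negative-element counts, magnitudes relative to $2m$, and residues modulo~$4$.
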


\begin{proof}
Since $\Gamma$ is vertex-transitive there are $2m^2$ choices for the vertex labeled with~$1$ and each such choice gives the same number of different distance magic labelings of~$\Gamma$. By Theorem~\ref{lemma2} each distance magic labeling $\ell$ of~$\Gamma$ with $\ell_{0,0}=1$ corresponds to (a unique) admissible pair $\left<\bold{a},\bold{b}\right>$ of sequences. By Proposition~\ref{lemma1} any permutation of the elements $a_1, a_2, \ldots, a_{m-1}$ in~$\bold{a}$ and $b_1, b_2, \ldots, b_{m-1}$ in~$\bold{b}$ yields another admissible pair of sequences, and by Theorem~\ref{lemma2} this pair gives a different distance magic labeling of~$\Gamma$. Therefore, each admissible pair of sets gives rise to $(m-1)!^2$ different distance magic labelings of~$\Gamma$, and so it suffices to count the number of admissible pairs of sets.

In what follows we establish a lower bound on this number by considering only admissible pairs of sets arising from those given by Lemma~\ref{lemma:type1} and Lemma~\ref{lemma:type2} and possibly applying Lemma~\ref{lemma:tilde}. Consider the pairs derived from the basic pair from Lemma~\ref{lemma:type1} (making corresponding derived pairs and possibly applying Lemma~\ref{lemma:tilde} - we call this process a {\em derivation}). There are clearly $2(1+2(2^{m-1}-1))=2(2^m-1)$ such derivations (including the trivial one), since we have to decide whether we apply Lemma~\ref{lemma:tilde} or not, decide whether we are replacing any elements in the two sets and if so, in which and which ones. Of course, there are also $2(2^m-1)$ derivations starting from the basic pair from Lemma~\ref{lemma:type2}. We thus only need to determine whether two different derivations can result in the same admissible pair. 

Suppose first that a pair $\langle\tilde{\bold{a}}, \tilde{\bold{b}}\rangle$ is obtained by a derivation starting from the basic pair $\langle \bold{a}, \bold{b}\rangle$ from Lemma~\ref{lemma:type1}. If this derivation is trivial or arises only by applying  Lemma~\ref{lemma:tilde}, the sequences $\tilde{\bold{a}}$ and $\tilde{\bold{b}}$ contain no negative elements. In all other cases, one of the sequences $\tilde{\bold{a}}$ and $\tilde{\bold{b}}$ contains at least one negative element while the other does not. Moreover, the elements of the latter one are either all smaller than $2m$ (which happens if and only if we are changing the elements of $\bold{b}$), or are (with the exception of the smallest one) all greater than $2m-1$ (which happens if and only if we are changing the elements of $\bold{a}$). It is therefore easy to see that no two derivations starting from the basic pair $\langle \bold{a}, \bold{b}\rangle$ from Lemma~\ref{lemma:type1} lead to the same admissible pair. 

Suppose next that a pair $\langle\tilde{\bold{a}}, \tilde{\bold{b}}\rangle$ is obtained by a derivation starting from the basic pair $\langle \bold{a}, \bold{b}\rangle$ from  Lemma~\ref{lemma:type2} and note that at least one of the sequences $\tilde{\bold{a}}$ and $\tilde{\bold{b}}$ contains exactly $\frac{m-1}{2}$ negative elements. If this derivation is trivial or arises only by applying  Lemma~\ref{lemma:tilde}, then no element of any of the sequences $\tilde{\bold{a}}$ and $\tilde{\bold{b}}$ is congruent to $2$ or $3$ modulo $4$. In all other cases, one of the sequences $\tilde{\bold{a}}$ and $\tilde{\bold{b}}$ contains at least one element which is congruent to $2$ or $3$ modulo $4$, while the elements of the other one are either all congruent to $1$ modulo $4$, or are all divisible by $4$. Moreover, the elements of the latter one either all have absolute value smaller than $2m$ (which happens if and only if we are changing the elements of $\bold{b}$), or all but one have absolute value at least $4m-1$ (which happens if and only if we are changing the elements of $\bold{a}$). It is therefore easy to see that no two derivations starting from the basic pair $\langle \bold{a}, \bold{b}\rangle$ from Lemma~\ref{lemma:type2} lead to the same admissible pair. 

The only possible duplications can therefore arise from a derivation of the basic pair from  Lemma~\ref{lemma:type1} and from a derivation of the basic pair from  Lemma~\ref{lemma:type2}. Let $\langle\tilde{\bold{a}}, \tilde{\bold{b}}\rangle$ be a duplicated pair. Since on one hand at least one of $\tilde{\bold{a}}$ and $\tilde{\bold{b}}$ contains no negative elements, while on the other hand at least one of $\tilde{\bold{a}}$ and $\tilde{\bold{b}}$ contains exactly $\frac{m-1}{2}$ negative elements, we find that one of $\tilde{\bold{a}}$ and $\tilde{\bold{b}}$ contains no negative elements, while the other one has precisely $\frac{m-1}{2}$ negative elements. Suppose that the elements of the sequence with no negative elements are (with one exception) all greater than $2m-1$. Since $\langle\tilde{\bold{a}},\tilde{\bold{b}}\rangle$ is obtained by a derivation starting from the basic pair from  Lemma~\ref{lemma:type2}, the other sequence must be one of $[1, -3, 5, -7, \ldots, 2m-1]$ and $[0, -4, 4, -8, \ldots, 2m-2]$. However, such a sequence clearly cannot be obtained by a derivation starting from the basic pair from  Lemma~\ref{lemma:type1}. It thus follows that either ${\bold{\tilde{a}}} = [1,3,5,\ldots , 2m-1]$ or ${\bold{\tilde{b}}} = [0,2,4,\ldots , 2m-2]$. Since $\langle\tilde{\bold{a}},\tilde{\bold{b}}\rangle$ is obtained by a derivation starting from the basic pair from  Lemma~\ref{lemma:type2}, we must have that either ${\bold{\tilde{b}}} = [0, -4m, 4m, -8m, 8m, \ldots, 2(m-1)m]$ or ${\bold{\tilde{a}}} =  [1, -4m+1, 4m+1, -8m+1, 8m+1, \ldots, 2(m-1)m+1]$, respectively. It is easy to see that these two corresponding pairs are indeed duplicated, thus showing that all derivations give precisely $4(2^m-1)-2 = 2(2^{m+1}-3)$ admissible pairs of sets.
Together with the remarks from the first paragraph of this proof, we thus have at least $4m^2 \cdot (2^{m+1}-3) \cdot (m-1)!^2$ different distance magic labelings of~$\Gamma$.
\end{proof}

Despite the fact that we could not determine the exact number of all distance magic labelings of $C_m \square C_{2m}$ for each $m \ge 3$ odd, the result from Proposition~\ref{lemma4} seems to give a good lower bound, especially for larger values of~$m$. In fact, a computer search reveals that for $m \in \{3,5,7\}$ the ratio of the bound from Proposition~\ref{lemma4} and the exact number of distance magic labelings is $26/34 = 0.765$, $122/148 = 0.824$ and $506/538 = 0.941$, respectively.

\section{Distance magic labelings of $\bold{C_{2m} \square C_{2m}}$ with $\bold{m \ge 3}$ odd} \label{sec:4}

In this section we analyze distance magic labelings for the graphs $C_{2m} \square C_{2m}$ with $m \ge 3$ odd. It turns out that all distance magic labelings of these graphs can be obtained in a similar way as was done in the previous section. Before we proceed we introduce some additional notation that we will be using in this section.
Let $m \ge 3$ be an odd integer and let $L=[\ell_{i,j}]_{0 \le i,j< 2m}$ be a $2m \times 2m$ table of integers. Then the {\em pair of partial tables}, $par(L)$, of $L$ is the pair $\langle T, T'\rangle$ of $m \times m$ tables $T=[t_{i,j}]_{0 \le i,j < m}$ and $T'=[t'_{i,j}]_{0 \le i,j < m}$, where
\begin{equation} \label{eq:min}
t_{i,j}=\ell_{2i, 2j} \quad \textrm{and} \quad t'_{i,j}=\ell_{2i+1, 2j}, \textrm{ } 0 \le i, j < m.
\end{equation}

\noindent
Conversely, let $T=[t_{i,j}]_{0 \le i,j < m}$ and $T'=[t'_{i,j}]_{0 \le i,j < m}$ be two $m \times m $ tables of integers. Then the corresponding {\em merged table}, $mer(T,T')$, of $T$ and $T'$ is the $2m \times 2m$ table $[\ell_{i,j}]_{0 \le i,j < 2m}$, where
\begin{equation} \label{eq:asdfgh}
\begin{split}
\ell_{i,j}=\left\{
\begin{array}{ll}
t_{\frac{i}{2}, \frac{j}{2}}, & i,j \textrm{ even} \\
-t_{\frac{i+m}{2}, \frac{j+m}{2}}, & i,j \textrm{ odd} \\
t'_{\frac{i-1}{2}, \frac{j}{2}}, & i \textrm{ odd }, j \textrm{ even} \\
-t'_{\frac{i+m-1}{2}, \frac{j+m}{2}}, & i \textrm{ even}, j \textrm{ odd}.
\end{array}
\right.
\end{split}
\end{equation}
Note that since $m$ is odd, the above definition does indeed make sense and implies that $\ell_{i,j}=-\ell_{i+m, j+m}$ for all $i,j$ with $0 \le i,j < 2m$ (where the indices are computed modulo $2m$).

As in Section \ref{sec:3} this leads to the following definition. Let $m \ge 3$ be an odd integer. A pair $\langle T, T' \rangle$ of $m \times m$ tables of integers $T=[t_{i,j}]_{0 \le i,j < m}$ and $T'=[t'_{i,j}]_{0 \le i,j < m}$ is said to be {\em distance magic} if for each $k \in \mathcal{N}_{4m^2}$ exactly one of $k, -k$ appears in any of $T$ and $T'$, and for all $i,j$ with $0 \le i,j <m$, we have that
\begin{equation} \label{eq:tt}
\begin{split}
 t_{i-1, j} + t_{i, j} &= t_{i+ \frac{m-1}{2}, j+\frac{m-1}{2}} + t_{i+\frac{m-1}{2}, j+\frac{m+1}{2}}, \\
 t'_{i-1, j} + t'_{i, j} &= t'_{i+ \frac{m-1}{2}, j+\frac{m-1}{2}} + t'_{i+\frac{m-1}{2}, j+\frac{m+1}{2}}.
\end{split}
\end{equation}

\begin{lemma} \label{??}
Let $m \ge 3$ be an odd integer and let $\Gamma = C_{2m} \square C_{2m}$. If $\Gamma$ is distance magic and $L$ denotes the $2m \times 2m$ table corresponding to a distance magic labeling of $\Gamma$, then the pair of tables $par(L)$ is distance magic. Conversely, if the pair $\langle T, T' \rangle$ of $m \times m$ tables $T$ and $T'$ is distance magic then $mer(T, T')$ corresponds to a distance magic labeling of~$\Gamma$ and consequently $\Gamma$ is distance magic.
\end{lemma}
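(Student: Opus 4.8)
The plan is to mirror the proof of Lemma~\ref{welldefined}, treating the two implications separately and exploiting the symmetry $\ell_{i,j} = -\ell_{i+m, j+m}$, which is supplied by Corollary~\ref{lemma:submm} in one direction and by the definition~(\ref{eq:asdfgh}) of a merged table in the other. The single recurring device throughout is that, since $m$ is odd, adding $m$ to a coordinate flips its parity, so the involution $(i,j) \mapsto (i+m, j+m)$ interchanges even and odd rows as well as even and odd columns.

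For the forward direction, let $L$ be the table of a distance magic labeling $\ell$ of $\Gamma$ and set $\langle T, T'\rangle = par(L)$. First I would verify the label condition. By~(\ref{eq:min}) the entries of $T$ and $T'$ are precisely the $2m^2$ entries of $L$ lying in even columns. By Corollary~\ref{lemma:submm} we have $\ell_{i,j} = -\ell_{i+m, j+m}$, and since $m$ is odd this pairs each even-column entry $k$ with the odd-column entry $-k$; as $L$ realizes every value of $\mathcal{N}_{4m^2}$ exactly once, it follows that exactly one of $k, -k$ occurs among the entries of $T$ and $T'$. To obtain the first relation of~(\ref{eq:tt}) I would write the weight-$0$ equation of $\ell$ at the vertex $(2i-1, 2j)$, namely $\ell_{2i-2,2j} + \ell_{2i,2j} + \ell_{2i-1,2j-1} + \ell_{2i-1,2j+1} = 0$; the first two terms are $t_{i-1,j} + t_{i,j}$, while applying Corollary~\ref{lemma:submm} to the last two (whose indices become even after adding $m$) turns them into $-t_{i+\frac{m-1}{2}, j+\frac{m-1}{2}} - t_{i+\frac{m-1}{2}, j+\frac{m+1}{2}}$. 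The second relation of~(\ref{eq:tt}) follows identically from the weight-$0$ equation at $(2i, 2j)$.

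For the converse, let $\langle T, T'\rangle$ be distance magic and put $L = mer(T, T')$. The four branches of~(\ref{eq:asdfgh}) show that the values of $L$ are exactly the entries of $T$ and $T'$ together with their negatives; since the label condition on $\langle T, T'\rangle$ says the entries of $T$ and $T'$ hit each pair $\{k, -k\}$ exactly once, $L$ is a bijection onto $\mathcal{N}_{4m^2}$. It then remains to check that every vertex has weight $0$. I would verify this directly for the vertices $(2i, 2j)$ and $(2i+1, 2j)$: substituting~(\ref{eq:asdfgh}) into the weight expression and collecting terms yields, respectively, the $T'$-relation of~(\ref{eq:tt}) and the $T$-relation of~(\ref{eq:tt}) with $i$ replaced by $i+1$, both of which vanish. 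For the two remaining parity classes I would invoke that~(\ref{eq:asdfgh}) gives $\ell_{i,j} = -\ell_{i+m, j+m}$, whence the weight at $(x+m, y+m)$ is the negative of the weight at $(x,y)$; since adding $(m,m)$ sends $(2i, 2j+1)$ to an odd-even vertex and $(2i+1, 2j+1)$ to an even-even vertex, their weights also vanish.

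I expect the only real obstacle to be bookkeeping: tracking the parity of each shifted index so that one lands in the correct branch of the piecewise definition~(\ref{eq:asdfgh}), and correctly reducing indices modulo $2m$. The underlying identities are forced once the right vertices are chosen, so the argument is routine once the even/odd column swap and the case reduction via the $(m,m)$-shift are in place.
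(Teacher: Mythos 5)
Your proposal is correct and follows essentially the same route as the paper's proof: the forward direction uses Corollary~\ref{lemma:submm} for the label condition and the weight-zero equations at $(2i-1,2j)$ and $(2i,2j)$ combined with the antipodal symmetry to derive~(\ref{eq:tt}), exactly as in the paper. In the converse, where the paper merely states the weight-zero verification is easy, you supply the details by checking the two even-column parity classes directly and handling the other two via the relation $\ell_{i,j}=-\ell_{i+m,j+m}$; this is a clean way to fill in what the paper leaves to the reader.
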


\begin{proof}
Suppose that $\Gamma$ is distance magic and let $L$ denote the $2m \times 2m$ table corresponding to a distance magic labeling~$\ell$ of~$\Gamma$. Let $\langle T, T'\rangle = par(L)$. The table~$L$ consists of the $4m^2$ integers from $\mathcal{N}_{4m^2}$. By Corollary~\ref{lemma:submm} we have that $\ell_{i,j}= -\ell_{i+m, j+m}$ for all $i, j$ with $0 \le i,j < 2m$, and so (\ref{eq:min}) implies that for each $k \in \mathcal{N}_{4m^2}$ exactly one of $k, -k$ appears in any of $T$ and  $T'$. Moreover, since $\ell$ is a distance magic labeling, (\ref{eq:min}) and Corollary~\ref{lemma:submm} imply that for all $i,j$ with $0 \le i,j < 2m$ we have that
\begin{equation*} 
\begin{split}
t_{i-1, j} + t_{i, j} & = \ell_{2i-2, 2j} + \ell_{2i, 2j} = -\ell_{2i-1, 2j-1} - \ell_{2i-1, 2j+1} \\
 & = \ell_{2i+m-1, 2j+m-1} + \ell_{2i+m-1, 2j+m+1} = t_{i+ \frac{m-1}{2}, j+\frac{m-1}{2}} + t_{i+\frac{m-1}{2}, j+\frac{m+1}{2}}
\end{split}
\end{equation*} and
$$ t'_{i-1, j} + t'_{i, j} = t'_{i+\frac{m-1}{2}, j+\frac{m-1}{2}} + t'_{i+\frac{m-1}{2}, j+\frac{m+1}{2}}.$$
Therefore, the pair $\langle T, T' \rangle$ is distance magic.

For the converse, suppose that the pair $\langle T, T' \rangle$ of tables $T$ and $T'$ is distance magic and let $L=mer(T,T')$. Then (\ref{eq:asdfgh}) implies that $L$ consists of all the integers from $\mathcal{N}_{4m^2}$. Moreover, it is easy to verify that by (\ref{eq:asdfgh}) and (\ref{eq:tt}) we have that $\ell_{i-1, j} + \ell_{i+1, j} + \ell_{i, j-1} + \ell_{i, j+1} = 0 $ for all $i,j$ with $0 \le i, j <2m$, implying that $L$ corresponds to a distance magic labeling of~$\Gamma$.

\end{proof}

\begin{proposition} \label{lemma21}
Let $m \ge 3$ be an odd integer and let $\Gamma=C_{2m} \square C_{2m}$.  Let $a_0, a_1, \ldots, a_{m-1} \in \{1-4m^2, \ldots, 4m^2-1\}$ and $c_0, c_1, \ldots, c_{m-1} \in \{1, \ldots, 8m^2-1\}$ be odd integers, and $b_0, b_1, \ldots, b_{m-1} \in \{-4m^2, \ldots, 4m^2-2\}$ and $d_0, d_1, \ldots, d_{m-1} \in \{-4m^2, \ldots, 4m^2-2m\}$ be even integers such that $a_0=c_0=1$, $b_0=0$ and $\{x(a_i+b_j) \colon x \in \{-1, 1\}, 0 \le i,j <m  \} \cup \{x(c_i+d_j) \colon x \in \{-1, 1\}, 0 \le i,j <m  \} = \mathcal{N}_{4m^2}$. Then setting $T= \mathcal{T}([a_0, a_1, \ldots, a_{m-1}], 1)+ \mathcal{T}([b_0, b_1, \ldots, b_{m-1}], m-1)$ and $T'= \mathcal{T}([c_0, c_1, \ldots, c_{m-1}], 1)+ \mathcal{T}([d_0, d_1, \ldots, d_{m-1}], m-1)$, the pair $\langle T, T' \rangle$ is distance magic and thus $\Gamma$ is distance magic.
\end{proposition}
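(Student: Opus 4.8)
The plan is to follow the blueprint of the proof of Proposition~\ref{lemma1}, exploiting the cyclic-shift structure of the tables $\mathcal{T}(\cdot,s)$ to reduce both defining conditions of a distance magic pair to elementary index arithmetic modulo $m$. First I would record the explicit form of the entries. Writing $T=[t_{i,j}]$ and $T'=[t'_{i,j}]$, the definition of $\mathcal{T}(R,s)$ together with the congruences $\modulo{m}{0}{m}$ and $\modulo{m-1}{-1}{m}$ gives $t_{i,j}=a_{j-i}+b_{j+i}$ and $t'_{i,j}=c_{j-i}+d_{j+i}$, with all indices read modulo~$m$.

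With these formulas in hand, the verification of the two equalities in~(\ref{eq:tt}) is a direct substitution. For $T$ the left-hand side expands to $a_{j-i+1}+a_{j-i}+b_{j+i-1}+b_{j+i}$, while the two terms on the right-hand side simplify, using $\modulo{m}{0}{m}$ and $\modulo{m-1}{-1}{m}$ in the $b$-indices, to $a_{j-i}+b_{j+i-1}$ and $a_{j-i+1}+b_{j+i}$ respectively; the two sides therefore agree. The computation for $T'$ is identical with $c,d$ in place of $a,b$. I expect no difficulty here beyond keeping the index bookkeeping straight.

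The remaining, and more delicate, point is the ``exactly one of $k,-k$'' condition. Here I would first observe that since $m$ is odd the linear map $(i,j)\mapsto(j-i,j+i)$ is a bijection of $\ZZ_m\times\ZZ_m$ (its determinant $-2$ is invertible modulo the odd number~$m$). Consequently the multiset of entries of $T$ is exactly $\{a_u+b_v\colon 0\le u,v<m\}$ and that of $T'$ is exactly $\{c_u+d_v\colon 0\le u,v<m\}$, so the combined collection $W$ of all entries of $T$ and $T'$ has $2m^2$ members, and its underlying set $U$ satisfies $U\cup(-U)=\mathcal{N}_{4m^2}$ by hypothesis (here one uses that $\mathcal{N}_{4m^2}$ is symmetric under negation). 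A squeeze argument then finishes the proof: since $|\mathcal{N}_{4m^2}|=4m^2$ and $|U\cup(-U)|\le 2|U|\le 2|W|=4m^2$, equality forces $|U|=2m^2$ (so the $2m^2$ entries of $T$ and $T'$ are pairwise distinct) and $U\cap(-U)=\emptyset$. Combined with $U\cup(-U)=\mathcal{N}_{4m^2}$, this is precisely the statement that for every $k\in\mathcal{N}_{4m^2}$ exactly one of $k,-k$ appears among the entries of $T$ and $T'$.

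Thus $\langle T,T'\rangle$ satisfies both defining conditions and is distance magic, whence $\Gamma$ is distance magic by Lemma~\ref{??}. I expect the last step to be the main obstacle: one must be sure that the hypothesis pins down the cardinalities tightly enough to force both the distinctness of all entries and the absence of any $\pm$-collision, which is exactly what the double inequality $|U|\le 2m^2\le|U|$ delivers once the bijection $(i,j)\mapsto(j-i,j+i)$ has been used to identify the entry multisets with the sum sets.
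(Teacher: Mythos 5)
Your proof is correct and follows essentially the same route as the paper's: the same expansion $t_{i,j}=a_{j-i}+b_{j+i}$, the same direct verification of~(\ref{eq:tt}), and the same identification of the entry sets of $T$ and $T'$ with the sum sets $\{a_u+b_v\}$ and $\{c_u+d_v\}$ (the paper writes down the explicit inverse of your map $(i,j)\mapsto(j-i,j+i)$, including the two parity cases, rather than invoking invertibility of the determinant $-2$ modulo the odd integer $m$). Your explicit squeeze argument for the ``exactly one of $k,-k$'' condition is a sound elaboration of a counting step the paper compresses into ``it clearly suffices to prove''.
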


\begin{proof}
Let $A=\mathcal{T}([a_0, a_1, \ldots, a_{m-1}], 1)$ and $B=\mathcal{T}([b_0, b_1, \ldots, b_{m-1}], m-1)$. For any $i, j$ with $0 \le i, j < m$ we have that
$$a_{i-1,j}=a_{j-i+1}=a_{i+\frac{m-1}{2}, j+\frac{m+1}{2}}, \quad a_{i,j}=a_{j-i}=a_{i+\frac{m-1}{2}, j+\frac{m-1}{2}} \quad \textrm{and}$$  
$$b_{i-1,j}=b_{j+i-1}=b_{i+\frac{m-1}{2}, j+\frac{m-1}{2}}, \quad b_{i,j}=b_{j+i}=b_{i+\frac{m-1}{2}, j+\frac{m+1}{2}},$$
which implies that the condition from (\ref{eq:tt}) holds for the table~$T$ (recall that $t_{i,j}=a_{i,j}+b_{i,j}$). Since $T'$ is constructed in the same manner, the condition from (\ref{eq:tt}) also holds for~$T'$.

We now prove that for each $k \in \mathcal{N}_{4m^2}$ exactly one of $k, -k$ appears in any of $T$ and $T'$. To do so it clearly suffices to prove that $\{t_{i,j} \colon 0 \le i,j <m \} = \{a_i+b_j \colon 0 \le i,j <m \}$ and $\{t'_{i,j} \colon 0 \le i,j <m \} = \{c_i+d_j \colon 0 \le i,j <m \}$. We verify only the first equality (the second one is analogous). Clearly, $\{t_{i,j} \colon 0 \le i,j <m \} \subseteq \{a_i+b_j \colon 0 \le i,j <m \}$. Conversely, for each $i,j$ with $0 \le i, j < m$, it is easy to verify that letting 
\begin{multicols}{2}
\noindent
\begin{equation*}
\begin{split}
i' = \left\{
\begin{array}{ll}
\frac{j-i}{2}, & \modulo{i}{j}{2} \\
\frac{j-i+m}{2}, & \notmodulo{i}{j}{2}
\end{array}
\right. \quad \textrm{and}
\end{split}
\end{equation*}
\begin{equation*}
\begin{split}
j' = \left\{
\begin{array}{ll}
\frac{j+i}{2}, & \modulo{i}{j}{2} \\
\frac{j+i+m}{2}, & \notmodulo{i}{j}{2}
\end{array}
\right. \quad
\end{split}
\end{equation*}
\end{multicols}
\noindent
we have that $t_{i',j'}=a_{i}+b_{j}$. Therefore, the pair $\langle T, T' \rangle$ is distance magic. By Lemma~\ref{??} the table $mer(T,T')$ corresponds to a distance magic labeling of~$\Gamma$. 
\end{proof}

To prove that the graph $C_{2m} \square C_{2m}$ is distance magic for all $m \ge 3$ odd, it thus suffices to find sets of integers $a_i$, $b_i$, $c_i$, $d_i$ with $0 \le i <m$, satisfying the conditions of Proposition~\ref{lemma21}.

\begin{proposition}\label{example2}
Let $m\ge 3$ be an odd integer. Let 
\setlength{\belowdisplayskip}{7pt}  \setlength{\belowdisplayshortskip}{7pt} 
\setlength{\abovedisplayskip}{7pt}  \setlength{\abovedisplayshortskip}{-3pt}
$$\bold{a} =[1+2i \colon 0 \le i <m], \quad \bold{b}=[2im \colon 0 \le i <m], \quad \bold{d}=[2im+2m^2 \colon 0 \le i < m],$$ $$T= \mathcal{T}(\bold{a}, 1)+ \mathcal{T}(\bold{b}, m-1) \quad {\rm and} \quad T'= \mathcal{T}(\bold{a}, 1)+ \mathcal{T}(\bold{d}, m-1).$$ Then the pair  $\langle T, T' \rangle$ is distance magic and thus the graph $C_{2m} \square C_{2m}$ is distance magic.
\end{proposition}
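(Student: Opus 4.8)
The plan is to verify directly that the four sequences appearing in the statement satisfy every hypothesis of Proposition~\ref{lemma21}; the conclusion then follows at once from that proposition together with Lemma~\ref{??}. Note that here $\bold{c}=\bold{a}$ (since $T'=\mathcal{T}(\bold{a},1)+\mathcal{T}(\bold{d},m-1)$), so $c_i=a_i=1+2i$, while $b_i=2im$ and $d_i=2im+2m^2$. First I would dispatch the easy structural requirements. The $a_i=c_i=1+2i$ are odd and the $b_i,d_i$ are even; moreover $a_0=c_0=1$ and $b_0=0$. For the range conditions one checks that $a_i=c_i\in\{1,3,\ldots,2m-1\}$, that $b_i\in\{0,2m,\ldots,2m^2-2m\}$, and that $d_i\in\{2m^2,2m^2+2m,\ldots,4m^2-2m\}$; all of these lie in the prescribed intervals, the upper bound $4m^2-2m$ for $\bold{d}$ being attained exactly.

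The heart of the argument is the covering condition. I would compute
\[
a_i+b_j=(1+2i)+2jm=1+2(i+jm),
\]
and observe that as $(i,j)$ ranges over $\{0,\ldots,m-1\}^2$, the integer $i+jm$ runs bijectively over $\{0,1,\ldots,m^2-1\}$ (this is just its mixed-radix / base-$m$ expansion). Hence $\{a_i+b_j\colon 0\le i,j<m\}$ is precisely the set of odd integers in $[1,2m^2-1]$. In the same way,
\[
c_i+d_j=1+2(i+jm)+2m^2
\]
runs over the odd integers in $[2m^2+1,4m^2-1]$, so the two sets together partition the odd integers in $[1,4m^2-1]$. Appending the factor $x\in\{-1,1\}$ then produces exactly all odd integers in $[-(4m^2-1),\,4m^2-1]$, which is $\mathcal{N}_{4m^2}$, as required.

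Having confirmed every hypothesis, Proposition~\ref{lemma21} yields that $\langle T,T'\rangle$ is distance magic, and Lemma~\ref{??} then gives that $mer(T,T')$ corresponds to a distance magic labeling of $\Gamma=C_{2m}\square C_{2m}$, so $\Gamma$ is distance magic. There is no genuine obstacle here: the only point requiring a moment's thought is the bijectivity of $(i,j)\mapsto i+jm$ onto $\{0,\ldots,m^2-1\}$, and even that is the standard mixed-radix enumeration. The entire proof is thus a short verification, in the same spirit as Proposition~\ref{example} for the case $C_m\square C_{2m}$.
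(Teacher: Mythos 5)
Your proof is correct and follows essentially the same route as the paper's: both verify the hypotheses of Proposition~\ref{lemma21} (chiefly the covering condition, which the paper dismisses as ``clear'' and you justify via the base-$m$ bijection $(i,j)\mapsto i+jm$) and then invoke that proposition. The only cosmetic difference is that you additionally cite Lemma~\ref{??}, which is unnecessary since Proposition~\ref{lemma21} already concludes that $\Gamma$ is distance magic.
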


\begin{proof}
Clearly, $\{x(a_i+b_j) \colon x \in \{-1, 1\}, 0 \le i, j <m\} = \mathcal{N}_{2m^2}$ and $\{x(a_i+d_j) \colon x \in \{-1, 1\}, 0 \le i, j <m\} = \mathcal{N}_{4m^2} \setminus \mathcal{N}_{2m^2}$. Therefore Proposition~\ref{lemma21} implies that the pair $\langle T, T' \rangle$ is distance magic and so the graph $C_{2m} \square C_{2m}$ is distance magic (see Figure~\ref{fig:Case2} for a corresponding distance magic labeling of $C_{14} \square C_{14}$). 
\end{proof}

\begin{figure}[h] 
\centering
\includegraphics[scale=0.75, trim={1.7cm 16.4cm 3cm 1.8cm}, clip]{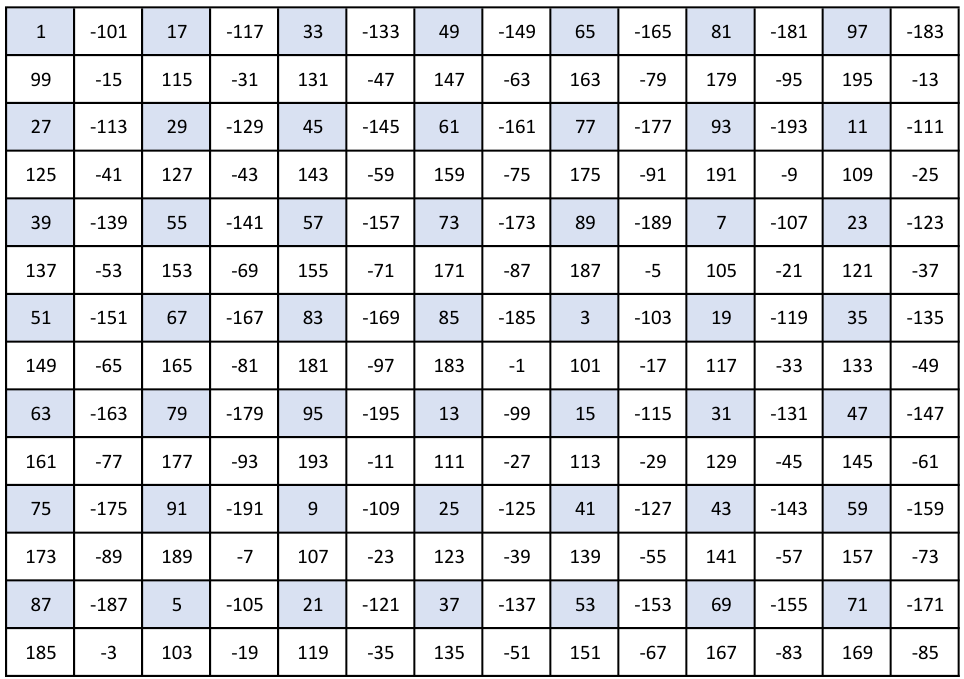}
\caption{A table corresponding to a distance magic labeling of $C_{14} \square C_{14}$.}
\label{fig:Case2}
\end{figure}

Having proved the existence of a distance magic labeling of $C_{2m} \square C_{2m}$ for each odd $m \ge 3$, we now address the problem from~\cite{RSP04} of determining all distance magic labelings of these graphs. We first make the following simple observation whose proof is a simple induction on $s$ and is left to the reader.

\begin{lemma} \label{lemmas2}
Let $m \ge 3$ be an odd integer and let $\ell$ be a distance magic labeling of $C_{2m} \square C_{2m}$. Then for all $i$, $j$, $s$ with $0 \le i,j,s  < 2m$ we 
have that
\begin{equation} \label{eq:labeling}
\ell_{i,j} + \ell_{i-1,j+1} = (-1)^s(\ell_{i+s,j+s} + \ell_{i+s-1, j+s+1}).
\end{equation}
\end{lemma}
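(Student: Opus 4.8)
The statement to prove is equation~(\ref{eq:labeling}), namely that for a distance magic labeling $\ell$ of $C_{2m} \square C_{2m}$ with $m \ge 3$ odd, the quantity $\ell_{i+s,j+s} + \ell_{i+s-1,j+s+1}$ equals $(-1)^s(\ell_{i,j} + \ell_{i-1,j+1})$ for all $i,j,s$. Since the indices are taken modulo $2m$, it suffices to establish the claim for all non-negative integers $s$, and I would proceed by induction on $s$. The base case $s=0$ is trivially an identity. The real content is the step from $s$ to $s+1$, which by the inductive hypothesis reduces to showing the single relation
\begin{equation*}
\ell_{i+1,j+1} + \ell_{i,j+2} = -(\ell_{i,j} + \ell_{i-1,j+1})
\end{equation*}
for all $i,j$; once this "one-step" identity is in hand, chaining it against the hypothesis for $s$ (with $(i,j)$ replaced by $(i+1,j+1)$) produces the factor $(-1)^{s+1}$ and completes the induction.

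\textbf{Proving the one-step identity.}
The one-step identity should follow directly from the distance magic condition, which in the alternative definition of Section~\ref{sec:2} states that the weight of every vertex is $0$, i.e.
\begin{equation*}
\ell_{i-1,j} + \ell_{i+1,j} + \ell_{i,j-1} + \ell_{i,j+1} = 0
\end{equation*}
for all $i,j$ with indices modulo $2m$. The plan is to write down this zero-weight relation at two carefully chosen adjacent vertices and subtract (or add) them so that the cross terms cancel and only the four labels appearing in the one-step identity survive. Concretely, I would apply the weight condition at the vertex $(i,j+1)$ and at the vertex $(i,j)$ (or a similarly offset pair) and combine them; the overlapping neighbor labels telescope, leaving exactly the desired relation between $\ell_{i+1,j+1}$, $\ell_{i,j+2}$, $\ell_{i,j}$ and $\ell_{i-1,j+1}$. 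The correct pairing of vertices is the crux: one wants the two weight-zero equations whose supports overlap in precisely the two labels that should be eliminated.

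\textbf{The main obstacle.}
The main difficulty is purely bookkeeping: selecting the right pair of vertices at which to invoke the zero-weight condition so that subtraction leaves the four-term relation with the correct signs, and then verifying that the sign $(-1)^{s+1}$ emerges cleanly when the one-step identity is fed into the inductive hypothesis. Because every index is reduced modulo $2m$ and the diagonal offset $(i+s,j+s)$ mixes both coordinates, I would keep the two coordinates explicit throughout and double-check the parity behavior at the wrap-around, using that $m$ is odd only insofar as it is needed for consistency of the modular reductions (though for this particular lemma the oddness of $m$ does not appear essential, as the recursion is driven entirely by the local weight condition). I expect no genuine conceptual obstruction, and indeed the authors flag the argument as "a simple induction on $s$ ... left to the reader," which matches the expectation that, once the one-step identity is extracted from the magic condition, the induction is routine.
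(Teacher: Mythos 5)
Your proof is correct and follows exactly the route the paper intends (the paper gives no written proof at all, dismissing the lemma as ``a simple induction on $s$ \ldots left to the reader''): induct on $s$, with the trivial base case and an inductive step driven by a one-step identity. However, the ``main obstacle'' you describe does not exist. The one-step identity
\begin{equation*}
\ell_{i,j} + \ell_{i-1,j+1} + \ell_{i+1,j+1} + \ell_{i,j+2} = 0
\end{equation*}
is \emph{verbatim} the zero-weight condition at the single vertex $(i,j+1)$, whose four neighbors are precisely $(i-1,j+1)$, $(i+1,j+1)$, $(i,j)$ and $(i,j+2)$; no second vertex and no subtraction are needed. In fact, your proposed combination of the weight conditions at $(i,j+1)$ and $(i,j)$ would not telescope: those two neighborhoods are disjoint, so subtracting the two equations yields an eight-term relation in which nothing cancels. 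With the correct observation the inductive step is immediate: apply the hypothesis for $s$, then the weight condition at $(i+s,\,j+s+1)$ to trade $\ell_{i+s,j+s}+\ell_{i+s-1,j+s+1}$ for $-\bigl(\ell_{i+s+1,j+s+1}+\ell_{i+s,j+s+2}\bigr)$, producing the factor $(-1)^{s+1}$. You are also right that the oddness of $m$ plays no role in this particular lemma.
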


We are now ready to prove that in fact all distance magic labelings of $C_{2m} \square C_{2m}$ with $m \ge 3$ odd arise from the construction in Proposition~\ref{lemma21}.

\begin{theorem} \label{lemma22}
Let $m\ge 3$ be an odd integer and let $\ell$ be a distance magic labeling of $C_{2m} \square C_{2m}$. If $\ell_{0,0}=1$ then there exist uniquely determined sequences $\bold{a}=[a_i \colon 0 \le i < m]$, $\bold{b}=[b_i \colon 0 \le i < m]$, $\bold{c}=[c_i \colon 0 \le i < m]$ and $\bold{d}=[d_i \colon 0 \le i < m]$ satisfying the assumptions of Proposition~\ref{lemma21}, such that $\ell$ is the labeling obtained from them using the construction in Proposition~\ref{lemma21}.
\end{theorem}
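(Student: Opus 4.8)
The plan is to mirror the strategy already used in Theorem~\ref{lemma2}, now split across the two partial tables. Given a distance magic labeling $\ell$ of $C_{2m}\square C_{2m}$ with $\ell_{0,0}=1$, Lemma~\ref{??} tells me the pair $\langle T, T'\rangle = par(L)$ is distance magic, where $L$ is the table of~$\ell$. My task reduces to recovering the four sequences $\bold{a},\bold{b},\bold{c},\bold{d}$ so that $T=\mathcal{T}(\bold{a},1)+\mathcal{T}(\bold{b},m-1)$ and $T'=\mathcal{T}(\bold{c},1)+\mathcal{T}(\bold{d},m-1)$, and then to verify uniqueness. Because the normalization in Proposition~\ref{lemma21} fixes $a_0=c_0=1$ and $b_0=0$ (and $d_0$ is then forced), I would first read off the candidate sequences directly from the entries of $T$ and $T'$ along appropriate diagonals, exactly as $\bold{a}$ and $\bold{b}$ were extracted from the anti-diagonal positions $(i,\frac{m+1}{2}i)$ in Theorem~\ref{lemma2}.

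First I would isolate the sequence-recovery for the single table $T$. The shift parameters here are $1$ and $m-1\equiv -1 \pmod m$, so the analogue of equation~(\ref{eq:t2}) is a telescoping identity for $t_{i,j}-t_{i+\frac{m-1}{2},j+\frac{m-1}{2}}$ (the two terms in the distance magic condition~(\ref{eq:tt}) for $T$) that remains invariant under the diagonal shift $(i,j)\mapsto(i-1,j-1)$; I would either prove this as a short induction exactly parallel to Lemma~\ref{lemma:t2}, or invoke Lemma~\ref{lemmas2}, whose identity~(\ref{eq:labeling}) is precisely the $2m\times 2m$ incarnation of this invariance and already encodes the needed constancy along diagonals. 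Using the normalization $t_{0,0}=\ell_{0,0}=1$, I would then define $a_i$ and $b_i$ via the values of $T$ at the two families of diagonal positions (the positions where $\bold{b}$ contributes $0$, respectively where $\bold{a}$ contributes $1$), and prove by induction on the diagonal index — the direct analogue of~(\ref{eq:induction2})–(\ref{eq:induction3}) — that $\mathcal{T}(\bold{a},1)+\mathcal{T}(\bold{b},m-1)=T$. The same argument applied verbatim to $T'$ produces $\bold{c}$ and $\bold{d}$, with $c_0=t'_{0,0}$ read off from the appropriate entry.

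Next I would verify that the four sequences jointly satisfy the \emph{global} hypothesis of Proposition~\ref{lemma21}, namely that $\{x(a_i+b_j)\}\cup\{x(c_i+d_j)\}=\mathcal{N}_{4m^2}$ over $x\in\{-1,1\}$ and $0\le i,j<m$. This follows the same counting as in Theorem~\ref{lemma2}: by the table reconstruction every $t_{i,j}$ equals some $a_{i'}+b_{j'}$ and every $t'_{i,j}$ equals some $c_{i'}+d_{j'}$, so $\{a_i+b_j\}=\{t_{i,j}\}$ and $\{c_i+d_j\}=\{t'_{i,j}\}$; since $\langle T,T'\rangle$ is distance magic, for each $k\in\mathcal{N}_{4m^2}$ exactly one of $k,-k$ occurs among the entries of $T$ and $T'$, and because there are at most $2m^2$ distinct such sums matching the $2m^2$ needed values, the union fills out $\mathcal{N}_{4m^2}$ exactly under sign change. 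The parity and range constraints on $a_i,b_i,c_i,d_i$ are then immediate from the fact that, by Corollary~\ref{lemma:submm}, $a$ and $b$ indices have entries of controlled parity, so the recovered sequences land in the stated intervals.

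Finally, uniqueness. The argument of Theorem~\ref{lemma2} transfers: in any admissible representation of $T$, the positions in $\mathcal{T}(\bold{b},m-1)$ carrying the value $0$ are forced (they are the positions of the diagonal determined by $b_0=0$), and at those positions $T$ equals the corresponding $a_i$; symmetrically the positions of value $1$ in $\mathcal{T}(\bold{a},1)$ pin down the $b_i$. Hence $\bold{a},\bold{b}$ are determined by $T$, and likewise $\bold{c},\bold{d}$ by $T'$. \textbf{The main obstacle} I anticipate is purely bookkeeping: the four interleaved cases in the definition~(\ref{eq:asdfgh}) of $mer$, together with the two shift values $1$ and $m-1$, make the index arithmetic in the reconstruction induction considerably more delicate than in the $C_m\square C_{2m}$ case, and one must take care that the diagonal along which $\bold{c},\bold{d}$ are extracted is consistent with the $t'_{i,j}=\ell_{2i+1,2j}$ offset so that $c_0$ receives the correct normalizing value rather than $a_0$. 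I expect no genuinely new idea beyond Theorem~\ref{lemma2}, only a careful separation of the two partial tables and verification that Lemma~\ref{lemmas2} supplies the diagonal-invariance that replaces Lemma~\ref{lemma:t2}.
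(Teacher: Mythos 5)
Your proposal is correct and follows essentially the same route as the paper: the paper likewise extracts the four sequences from diagonal entries of the table (with constant offsets chosen to enforce the normalization, e.g.\ $c_i=\ell_{i(m-1)+1,\,i(m+1)}-\ell_{1,0}+1$), proves $mer(A+B,C+D)=L$ by an induction powered by Lemma~\ref{lemmas2}, reduces to the positions $\ell_{2i,2j}$ and $\ell_{2i+1,2j}$ (your $T$ and $T'$) via Corollary~\ref{lemma:submm}, and settles uniqueness exactly as in Theorem~\ref{lemma2}. The one bookkeeping correction, which you flagged yourself, is that the normalization must be $c_0=1$ and $d_0=t'_{0,0}-1$ rather than $c_0=t'_{0,0}$, so the constant $\ell_{1,0}-1$ is shifted from $\bold{c}$ into $\bold{d}$.
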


\begin{proof}
Let $L$ denote the $2m \times 2m$ distance magic table corresponding to~$\ell$. Set 
\begin{equation*}
\bold{a} = [\ell_{i(m-1), i(m+1)} \colon 0 \le i < m],
\quad\quad
\bold{b} = [\ell_{i(m+1), i(m+1)}-1 \colon 0 \le i < m],
\end{equation*}
\begin{equation*}
\bold{c} = [\ell_{i(m-1)+1, i(m+1)}-\ell_{1,0}+1 \colon 0 \le i < m],
\quad\quad
\bold{d} = [\ell_{i(m+1)+1, i(m+1)}-1 \colon 0 \le i < m],
\end{equation*}
and let 
\begin{equation*}
A=\mathcal{T}(\bold{a},1), \quad B= \mathcal{T}({\bold{b},m-1}), \quad C=\mathcal{T}(\bold{c},1), \quad {\rm and} \quad D= \mathcal{T}({\bold{d},m-1}).
\end{equation*}
We claim that $mer( A+B, C+D ) =L$, that is, $\ell_{2i,2j} = a_{i,j}+b_{i,j}=-\ell_{2i+m, 2j+m}$ and $\ell_{2i+1, 2j}=c_{i,j}+d_{i,j}=-\ell_{2i+m+1, 2j+m}$ for all $i,j$ with $0 \le i,j <m$.
Note that we only need to show that
\begin{equation}\label{eq:noidea}
\ell_{2i,2j} = a_{i,j}+b_{i,j} \quad {\rm and } \quad \ell_{2i+1,2j} = c_{i,j}+d_{i,j}.
\end{equation}
We prove only the first part of~(\ref{eq:noidea}) (the proof of the second part is similar and is left to the reader). It clearly suffices to show that $a_{i,i+k}+b_{i,i+k}=\ell_{2i,2i+2k}$ for all $i,k$ with $0 \le i,k <m$ which is equivalent to 
\begin{equation} \label{eq:ind}
\ell_{k(m-1), k(m+1)} + \ell_{(2i+k)(m+1), (2i+k)(m+1)} -1 = \ell_{2i, 2i+2k}.
\end{equation}
Proceeding by induction on~$k$ and applying Lemma~\ref{lemmas2} we find that (\ref{eq:ind}) indeed holds for each~$k$ (we leave the technical details to the reader).  
All that is left to verify is the uniqueness of the sequences $\bold{a}$, $\bold{b}$, $\bold{c}$ and $\bold{d}$ satisfying the assumptions stated in Proposition~\ref{lemma21}. This follows from $a_0=c_0=1$ by an argument similar to the one in the proof of Theorem~\ref{lemma2}.  
\end{proof}

\section{Concluding remarks and future research}

It follows from Proposition~\ref{lemma:nec} that the condition from Theorem~\ref{theorem1} is necessary for a Cartesian product of cycles to be distance magic, while the fact that it is sufficient follows from Proposition~\ref{example} and Proposition~\ref{example2}. Thus, Theorem \ref{theorem1} gives a complete classification of distance magic Cartesian products of cycles.

As for the problem from \cite{RSP04} of determining all distance magic labelings of such graphs we thus far have the following results. In Theorem~\ref{lemma2} and Theorem~\ref{lemma22} we have shown that each distance magic labeling of a Cartesian product of cycles arises from a pair or quadruple of suitable sequences of integers. Consequently, finding all distance magic labelings of such graphs is equivalent to identifying all such pairs or quadruples of suitable sequences. Moreover, in Proposition~\ref{lemma4} we established a lower bound on the number of all suitable pairs for $C_{m} \square C_{2m}$ with $m \ge 3$ odd by counting the pairs of suitable sequences which arise in a ``natural" way (as described in Proposition~\ref{lemma4} and the series of lemmas preceding it). We did not provide an analogous lower bound on the number of all suitable quadruples arising in a ``natural" way for the graphs $C_{2m} \square C_{2m}$ for $m \ge 3$ odd, but we anticipate that an approach similar to the one used for $C_{m} \square C_{2m}$ can be used in this case as well. The following two problems therefore present a natural continuation of the research undertaken in this paper.

\begin{problem}
Determine the number of all pairs of sequences satisfying Proposition~\ref{lemma1}.
\end{problem}

\begin{problem}
Determine the number of all quadruples of sequences satisfying Proposition~\ref{lemma21}.
\end{problem}

We mention another possible avenue for future research. Recall that the Cartesian product $C_{m} \square C_{n}$ of cycles is a very special example of a tetravalent Cayley graph of an abelian group, namely it is $\Cay(\ZZ_m \times \ZZ_n; \{\pm(1,0), \pm(0,1)\} )$. In \cite{MS21}, where all distance magic tetravalent Cayley graphs of cyclic groups were classified, the problem of determining all distance magic tetravalent Cayley graphs of abelian groups was posed (see \cite[Problem 5.1]{MS21}). Theorem \ref{theorem1} thus presents the first step towards the solution of this problem. We are convinced that the approach using group characters from Section \ref{sec:2} can be used to analyze the distance magic property of tetravalent Cayley graphs of abelian groups with respect to more general connection sets.

\section*{Statements and Declarations}
Funding: \\
This work was supported in part by the Slovenian Research and Innovation Agency (Young researchers program, research program P1-0285 and research projects J1-2451, J1-3001 and J1-50000).
\medskip

\noindent
Competing interests: \\
The authors declare that they have no known competing financial interests or personal relationships that could have appeared to influence the work reported in this paper.

\printbibliography

\end{document}